\documentclass[11pt]{amsart}
\usepackage{amsmath,amssymb,mathrsfs}
\usepackage[colorlinks, linkcolor=red, citecolor=green,filecolor=magenta,urlcolor=cyan]{hyperref}

\allowdisplaybreaks

\newcommand{\abs}[1]{\left\lvert#1\right\rvert}
\newcommand{\kh}[1]{\left(#1\right)}%
\newcommand{\dkh}[1]{\left\{#1\right\}}%
\newcommand{\jkh}[1]{\left\langle #1\right\rangle}

\newtheorem{theorem}{Theorem}
\newtheorem{corollary}[theorem]{Corollary}
\newtheorem{lemma}[theorem]{Lemma}

\newtheorem{prop}[theorem]{Proposition}

\theoremstyle{remark}
\newtheorem{rem}{Remark}[section]

\begin{document}

\title[Cartan  hypersurface in $\mathbb S^5$]{A curvature characterization of the Cartan minimal hypersurface in  $\mathbb S^5$}

\author{Qing Cui}
\address{School of Mathematics \\ Southwest Jiaotong University \\ 611756 Chengdu \\ Sichuan \\ China} 
\email{cuiqing@swjtu.edu.cn}

 \begin{abstract} 
Lawson showed that a non-totally geodesic Einstein minimal hypersurface in $\mathbb S^5$ is congruent to the Clifford hypersurface $
    \mathbb S^2(1/\sqrt2)\times \mathbb S^2(1/\sqrt2).
$
It is also known, by work of Cartan and \^{O}tsuki, that a non-totally geodesic locally conformally flat minimal hypersurface in $\mathbb S^5$ is of \^{O}tsuki type, including the Clifford hypersurface
$
    \mathbb S^1(1/2)\times \mathbb S^3(\sqrt3/2).
$
In this paper we study closed minimal hypersurfaces
$M$ in $\mathbb S^5$ satisfying
$
    |W|^2=2|\mathring{\operatorname{Ric}}|^2,
$
where $W$ is the Weyl tensor and $\mathring{\operatorname{Ric}}$ is the trace-free Ricci tensor.  We call this the Euler-balanced condition.  We prove that such a hypersurface is either totally geodesic or congruent to the Cartan minimal hypersurface.
\end{abstract}
\maketitle

\section{Introduction}

Minimal hypersurfaces in the unit sphere form one of the central objects in submanifold geometry.  Even in the special ambient sphere $\mathbb S^5$, the class of closed minimal hypersurfaces is already remarkably rich. Among the classical and explicitly describable examples are the totally geodesic sphere $\mathbb S^4$, the Clifford minimal hypersurfaces
\begin{align*}
   \mathbb S^k\left(\sqrt{\frac{k}{4}}\right)
    \times
    \mathbb S^{4-k}\left(\sqrt{\frac{4-k}{4}}\right),
    \quad
    1\le k\le 3,
\end{align*} 
the Cartan minimal hypersurface with four distinct principal curvatures, and the \^{O}tsuki-type minimal hypersurfaces with two distinct principal curvatures. Besides these classical and explicitly describable examples, there are many further closed minimal hypersurfaces in spheres.  Hsiang and Lawson \cite{HsiLaw71} developed an equivariant construction of minimal submanifolds of low cohomogeneity, and subsequent work of Hsiang and his
collaborators produced non-homogeneous minimal hyperspheres in spheres \cite{Hsi83a, Hsi83b, HsiSte86, HsiHsiTom88}.  From a different point of view, Song's min--max theorem \cite{Son23} shows that any closed Riemannian manifold of dimension $3\le n\le7$ contains infinitely many smoothly embedded closed minimal hypersurfaces; this applies in particular to the round sphere $\mathbb S^5$.   This abundance makes the classification of closed minimal hypersurfaces in $\mathbb S^5$ a subtle problem, and it is natural to seek rigidity under additional curvature assumptions.  The dimension four of the hypersurface is also special from the intrinsic point of view, because the Weyl tensor and the Gauss--Bonnet--Chern formula enter the geometry in an essential way.

Let $M$ be a closed minimal hypersurface in $\mathbb S^5$.  We denote by $h$ the second fundamental form and by $A$ the shape operator, so that $h(X,Y)=\langle AX,Y\rangle.$ We set $f_k=\operatorname{tr}A^k$. Two classical intrinsic curvature conditions on $M$ can be expressed purely in terms of $|A|^2$ and $f_4$.  Indeed, the Gauss equation gives
$
    R=12-|A|^2,
$
where $R$ is the scalar curvature of $M$.  Moreover (see Lemma \ref{lem:curvature-formulas}),  
\begin{align*}
 |\mathring{\operatorname{Ric}}|^2=f_4-\frac14|A|^4, \quad |W|^2=
    \frac{7}{3}|A|^4-4f_4,
\end{align*}
where $W$ is the Weyl tensor and
$\mathring{\operatorname{Ric}}$ is the trace-free Ricci tensor.  Consequently,
\begin{align*}
 \frac14|A|^4\le f_4\le \frac{7}{12}|A|^4.
\end{align*}
The lower endpoint
$
    f_4=\frac14|A|^4
$
is precisely the Einstein condition
$
   \mathring{\operatorname{Ric}}=0.
$
The upper endpoint
$
    f_4=\frac{7}{12}|A|^4
$
is precisely the locally conformally flat condition
$
    W=0.
$
These two endpoint geometries are already well understood. 
Actually, by Lawson's classification of minimal Einstein hypersurfaces in the unit sphere \cite{Law69}, a
four-dimensional minimal Einstein hypersurface in $\mathbb S^5$ is either
totally geodesic or congruent to
$
    \mathbb S^2\left(\frac1{\sqrt2}\right)
    \times
    \mathbb S^2\left(\frac1{\sqrt2}\right).
$
On the other hand, by Cartan's theorem on conformally flat hypersurfaces \cite{Car1917} and \^{O}tsuki's classification of minimal hypersurfaces with two principal curvatures \cite{Ots70}, a non-totally geodesic closed locally conformally flat minimal hypersurface in
$\mathbb S^5$ is of \^{O}tsuki type, with the constant-principal-curvature case given by
$
    \mathbb S^1\left(\frac12\right)
    \times
    \mathbb S^3\left(\frac{\sqrt3}{2}\right).
$

The purpose of the present paper is to study a natural intermediate condition between the Einstein condition and the locally conformally flat condition.  We shall say that a four-dimensional Riemannian manifold is \emph{Euler-balanced}
if
 $
    |W|^2=2|\mathring{\operatorname{Ric}}|^2.
$
For a minimal hypersurface \(M^4\subset\mathbb S^5\), this condition is
equivalent to
$
    f_4=\frac{17}{36}|A|^4.
$
 It is not merely an
algebraic number between $1/4$ and $7/12$; rather, it is the unique value for
which the Weyl part and the trace-free Ricci part balance in the
Gauss--Bonnet--Chern integrand.  In fact, the four-dimensional
Gauss--Bonnet--Chern formula reads
\begin{align*}
    32\pi^2\chi(M)=\int_M \left( |W|^2 -2|\mathring{\operatorname{Ric}}|^2+\frac16R^2 \right)dV.
\end{align*}
Therefore, under the Euler-balanced condition, one obtains
\begin{equation}\label{Euler-balanced-total-curvature}
\chi(M)=\frac{1}{192\pi^2}\int_M R^2\,dV.
\end{equation}
Thus the Euler-balanced condition is intrinsically tied to the Euler characteristic of $M$.

Our main theorem shows that, in the minimal hypersurface setting, the Euler-balanced condition is in fact rigid.
\begin{theorem}\label{thm:main}
Let $M$ be a closed connected oriented minimal hypersurface in $\mathbb S^5$. Assume that
 $
    |W|^2=2|\mathring{\operatorname{Ric}}|^2.
$
Equivalently,
$
    f_4=\frac{17}{36}|A|^4.
$
Then $M$ is either totally geodesic or congruent to the Cartan minimal hypersurface in $\mathbb S^5$.
\end{theorem}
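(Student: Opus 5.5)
The plan is to use the integral (Simons-type) formulas for minimal hypersurfaces in $\mathbb S^5$ in the style of Peng--Terng and de Almeida--Brito. Together with the pointwise relation $f_4=\frac{17}{36}|A|^4$, they should force $|A|^2$ to be constant and $f_3=0$. Once that holds, $M$ is isoparametric by de Almeida--Brito, and the known classification picks out the Cartan hypersurface.

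\textbf{Step 1: a pointwise algebraic inequality.} Write $S=|A|^2$ and diagonalize $A$ with eigenvalues $\lambda_1,\dots,\lambda_4$, $\sum\lambda_i=0$. Set $x_i=\lambda_i^2/S$, so $\sum x_i=1$ and $\sum x_i^2=\frac{17}{36}$. The constrained optimization of $f_3^2$ on this set should give a sharp bound of the form $f_3^2\le c\,S^3$, with equality only at specific spectra. For the Cartan hypersurface the principal curvatures are $\pm\sqrt3,\pm1/\sqrt3$ and $S=\frac{20}{3}$. Then $f_4=9+9+\frac19+\frac19=\frac{164}{9}$ and $\frac{17}{36}S^2=\frac{17}{36}\cdot\frac{400}{9}=\frac{6800}{324}=\frac{1700}{81}$, which does not equal $\frac{164}{9}=\frac{1476}{81}$. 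So I must recheck the normalization of $|W|^2$. The paper's Lemma~\ref{lem:curvature-formulas} presumably makes Cartan satisfy the identity, so I will trust the stated equivalence and calibrate the algebra to the Cartan spectrum with $f_3=0$.

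\textbf{Step 2: the Simons identity.} Apply
\[
\tfrac12\Delta S=|\nabla A|^2+S(4-S).
\]
I will also use the higher-order identity for $\Delta f_4$, or equivalently the Peng--Terng formula involving $|\nabla^2A|^2$ and $\sum(\lambda_i-\lambda_j)^2h_{ijk}^2$. Differentiating $f_4=\frac{17}{36}S^2$ gives
\[
4\sum\lambda_i^3h_{iik}=\tfrac{17}{9}S\sum\lambda_ih_{iik}.
\]
Taking the Laplacian relates $\Delta f_4$ to $\Delta S$ and $|\nabla S|^2$. Integrating over $M$ then yields an identity of the form
\[
\int_M\Big(\sum_{i,j,k}(\text{quadratic in }\lambda)\,h_{ijk}^2+P(\lambda)\Big)=0.
\]
Here $P(\lambda)$ is a quartic/sextic expression in the principal curvatures.

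\textbf{Step 3: the main obstacle.} The goal is to show that, under the constraint, both the gradient term and $P(\lambda)$ have a definite sign. Equality should then force $\nabla S=0$ and $\nabla f_3=0$, i.e.\ constant $S$ and $f_3$. This is where I expect the real difficulty: controlling the terms $h_{ijk}^2$ weighted by $\lambda$'s.

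First I would use the differentiated constraint to eliminate components. In an eigenframe, $\sum_i(4\lambda_i^3-\frac{17}{9}S\lambda_i)h_{iik}=0$, and $\sum_i h_{iik}=0$. Combined with Codazzi symmetry, this lets me bound $\sum h_{iik}^2$ against the off-diagonal $h_{ijk}^2$. I would then apply a Lagrange-multiplier analysis on the spectrum set from Step 1 to check positivity of the resulting quadratic form.

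If positivity fails globally, the fallback is to split $M$ by the number of distinct principal curvatures:
\begin{itemize}
\item with two distinct principal curvatures, use \^Otsuki's classification, where the constraint is incompatible except $S=0$;
\item with three distinct principal curvatures, argue as in Chang's work.
\end{itemize}

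\textbf{Step 4: conclusion.} With $S$ constant, the Simons identity and Peng--Terng pinching give $S\in\{0,4,\dots\}$, and constant $f_3$ gives isoparametricity by de Almeida--Brito. Among isoparametric minimal hypersurfaces in $\mathbb S^5$, namely the totally geodesic sphere, the Clifford products and Cartan, only the totally geodesic one and Cartan satisfy $f_4=\frac{17}{36}S^2$.
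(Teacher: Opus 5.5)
\textbf{There is a genuine gap, and it sits in Step 3.} You flag it yourself, and it carries the entire difficulty: nothing in your plan forces $|A|^2$ to be constant. The hypothesis is a pointwise relation with $S$ a priori non-constant. You do not produce an integrated Simons/Peng--Terng identity whose terms have a definite sign. There is also little reason to expect one: even once $S$ is known to be constant, getting $f_3$ constant is a Chern-conjecture-type problem (open in dimension four in general), and the paper needs a dedicated argument for it. Peng--Terng pinching also gives only a gap above $S=n$, not $S\in\{0,4,\dots\}$.

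Your fallback does not work either:
\begin{itemize}
\item The number of distinct principal curvatures can change from point to point.
\item The two-distinct case never occurs where $S>0$. The constraint gives $K=S^2/144>0$, and the types $2+2$ and $3+1$ would give $f_4/S^2=\frac14$ and $\frac7{12}$. So \^{O}tsuki's classification is irrelevant.
\item Chang's theorem concerns $\mathbb S^4$ with constant scalar curvature, which you do not have.
\end{itemize}

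Your Step 1 ``calibration'' worry comes from a wrong spectrum. The Cartan hypersurface in $\mathbb S^5$ is the minimal isoparametric one with $g=4$. Its principal curvatures are $\cot\bigl(\tfrac{\pi}{8}+\tfrac{k\pi}{4}\bigr)$, i.e.\ $\pm(\sqrt2+1),\pm(\sqrt2-1)$. So $S=12$, $f_4=68=\frac{17}{36}\cdot144$ and $K=1$, and the stated equivalence holds exactly.

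\textbf{The missing idea is intrinsic and topological rather than analytic.} Under $|W|^2=2|\mathring{\operatorname{Ric}}|^2$ the Gauss--Bonnet--Chern integrand reduces to $\frac16R^2$. Hence $\chi(M)=\frac{1}{192\pi^2}\int_M(|A|^2-12)^2\,dV$, and it suffices to prove $\chi(M)=0$ when $A\not\equiv0$. The paper does this in two steps.
\begin{itemize}
\item \emph{An umbilic point forces $A\equiv0$.} By unique continuation for the Simons system, $A$ vanishes to some finite order $m$ at the point. Codazzi and minimality make the leading homogeneous part equal to $\nabla^2P$ for a harmonic homogeneous polynomial $P$ of degree $m+2$. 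This $P$ satisfies $\det\nabla^2P=\frac1{144}|\nabla^2P|^4\ge0$, is not identically zero, yet vanishes at the origin. That contradicts the Lewy--Gleason--Wolff lemma.
\item \emph{With no umbilics, $\chi(M)=0$.} Here $K>0$ gives $TM=E_-\oplus E_+$, the negative and positive principal directions, each of rank two. Since type $2+2$ is excluded, on an open cover $U_-\cup U_+=M$ one of $E_\pm$ splits into line bundles. So $e(E_-)\smile e(E_+)$ comes from $H^4(M,M;\mathbb R)=0$, giving $\chi(M)=0$ (after a double cover if $E_\pm$ are non-orientable).
\end{itemize}
This yields $|A|^2\equiv12$ and $f_4\equiv68$. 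Constancy of $f_3$ still needs a separate argument. The paper gets it from a $3$-form $\Psi$ with $d\Psi=\sum_rG_rf_{3,r}^2\,dV\ge0$, together with a cutoff/layer estimate near $|f_3|=8\sqrt{2\sqrt3}$. Your Step 3 offers no substitute for this step either.
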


Since the Cartan hypersurface is the minimal isoparametric hypersurface in $\mathbb S^5$ with four distinct principal curvatures, Theorem \ref{thm:main} is also related to the strong version of Chern's conjecture (for the original Chern conjecture, see \cite{ChedoCKob70}), which asserts that any closed minimal hypersurface in $\mathbb S^{n+1}$ with constant $|A|^2$ should be isoparametric.  This conjecture is proved to be true in dimension $n=3$ by Chang \cite{Cha93}, while in higher dimensions it remains widely open.  In dimension four,  recent preprints  \cite{HeXuZha26, GLLY26, DenKou26} have also investigated related rigidity problems under additional trace conditions,  such as the constancy of $f_3$ or $f_4$.

Our theorem is different in nature.  We do not assume that $|A|^2$ is constant. Instead, we impose the intrinsic pointwise   condition
$
    |W|^2=2|\mathring{\operatorname{Ric}}|^2.
$
The conclusion then forces
$|A|^2\equiv0$ or $|A|^2\equiv 12$. Thus the theorem may be viewed as a rigidity result outside the usual constant scalar curvature framework of Chern's conjecture: a pointwise balance between
the Weyl curvature and the trace-free Ricci curvature is strong enough to force the same kind of isoparametric rigidity predicted by Chern's conjecture.

We also want to point out that, the terminology ``Euler-balanced'' is meant in dimension four.  In dimension
two the condition is trivial, while in dimension three the Weyl tensor vanishes identically and the condition reduces to the Einstein condition.  Hence the nontrivial balance between the Weyl curvature and the trace-free Ricci curvature is a genuinely four-dimensional phenomenon.

In dimension four, there is also a natural connection with the Yamabe constant and the Euler-balanced condition. For any metric $g$ with $Y(M,[g])\ge0$, the Cauchy--Schwarz inequality and \eqref{Euler-balanced-total-curvature} give
\begin{equation*}
 Y(M,[g])^2  \le 192\pi^2\chi(M).
\end{equation*}
In particular, if $\chi(M)=0$, then $R_g\equiv0$ and $Y(M,[g])=0$.

\begin{rem}
The condition
$
    |W|^2=c|\mathring{\operatorname{Ric}}|^2
$
does not seem to define a standard class of four-dimensional Riemannian metrics.  It is related, however, to the extensive literature on pointwise and integral curvature pinching involving the Weyl tensor and the trace-free Ricci
tensor; see, for instance, \cite{Hui85, Mar86, Mar98, Tra17}.
In the present paper we use the special value $c=2$, for which the non-scalar part of the four-dimensional  Gauss--Bonnet--Chern integrand vanishes. This gives the condition a particularly natural meaning in dimension four.

To the best of our knowledge, the pointwise equality
$
    |W|^2=2|\mathring{\operatorname{Ric}}|^2
$
has not been studied as an independent rigidity condition for minimal hypersurfaces in spheres.  Our result shows that, in the hypersurface setting, this intrinsic curvature balance is strong enough to force the Cartan alternative.
\end{rem}

The paper is organized as follows. In Section 2, we review some notations and facts about the submanifold geometry, especially in dimension 4, and give several elementary lemmas which will be used in Section 3. In Section 3, we give a proof of Theorem \ref{thm:main}. The proof divides into 3 main steps: First we show that under the Euler balanced condition, the existence of an umbilic  point  forces $M$ to be totally geodesic. Second, we prove that the absence of umbilic  points implies $\chi(M)=0$; Third, we show that under the condition $|A|^2\equiv 12$ and $f_4\equiv 68$, $ M$ must be the Cartan minimal hypersurface. It should be mentioned that, very recently,  there are two recent preprints \cite{GLLY26, DenKou26} dealing with the case $|A|^2, f_4$ are constants. For completeness and clarity,  we give an alternative and shorter proof of the rigidity of  the special case: $|A|^2\equiv 12$ and $f_4\equiv 68$.

\vspace{1cm}

\section{Preliminaries}\label{sec:preliminaries} 
Let $M$ be  a  closed minimal hypersurface in the unit sphere $\mathbb{S}^5$ with second fundamental form $h$ and shape operator $A$.  We use the convention that repeated indices are summed from $1$ to $4$. Let $e_1,\cdots,e_4$ be a local orthonormal frame on $M$, and let $\omega_1,\cdots,\omega_4$ be its dual coframe. Write 
\begin{align*}
h=\sum_{i,j}h_{ij}\,\omega_i\otimes\omega_j, \qquad h_{ij}=h_{ji}. 
\end{align*}
 Since \(M\) is minimal, $ \sum_i h_{ii}=0. $ The covariant derivatives of $h$ are denoted by 
\begin{align*}
h_{ijk}=\nabla_k h_{ij}, \quad h_{ijkl}=\nabla_l\nabla_k h_{ij}.
\end{align*} 
Equivalently,
\begin{align}\label{hijkdef}
\sum_k h_{ijk}\omega_k = dh_{ij} + \sum_k h_{kj}\omega_{ki} + \sum_k h_{ik}\omega_{kj}.
\end{align} 
The structure equations are 
\begin{align}
d\omega_i=&\sum_j\omega_{ij}\wedge\omega_j, \qquad \omega_{ij}+\omega_{ji}=0, \label{domegai} \\
 d\omega_{ij} =& \sum_k\omega_{ik}\wedge\omega_{kj} - \frac12\sum_{k,l}R_{ijkl}\omega_k\wedge\omega_l. \label{domegaij} 
\end{align} 
With these conventions, the Gauss equation is 
\begin{align}\label{Gausseq}
R_{ijkl} = \delta_{ik}\delta_{jl} - \delta_{il}\delta_{jk} + h_{ik}h_{jl} - h_{il}h_{jk}.
\end{align}
 The Codazzi equation is 
\begin{align}\label{Codazzieq}
h_{ijk}=h_{ikj}. 
\end{align} 

For $k\ge1$, define $f_k:=\operatorname{tr}A^k=\sum_i\lambda_i^k. $ We also write $K:=\lambda_1\lambda_2\lambda_3\lambda_4 $ for the Gauss--Kronecker curvature. 
\begin{lemma}\label{lem:newton-identities} For a minimal hypersurface $M\subset\mathbb S^5$, one has $f_1=0,\  f_2=|A|^2, $ and 
\begin{align}\label{f4K}
f_4=\frac12|A|^4-4K.
\end{align} 
    Equivalently, the characteristic polynomial of $A$ is 
\begin{align}\label{polynomialofA}
 x^4-\frac{|A|^2}{2}x^2-\frac{f_3}{3}x+K.
\end{align} 
 \end{lemma}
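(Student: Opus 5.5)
The statement is Lemma~\ref{lem:newton-identities}. It is purely pointwise and algebraic, so I work at a fixed point $p\in M$ with an orthonormal frame diagonalizing $A$, with principal curvatures $\lambda_1,\dots,\lambda_4$. Minimality gives $f_1=\sum_i\lambda_i=\sum_i h_{ii}=0$ directly. Also $f_2=\sum_i\lambda_i^2=\sum_{i,j}h_{ij}^2=|A|^2$, since in this frame $h_{ij}=\lambda_i\delta_{ij}$. Both quantities are frame-independent, so nothing more is needed for these two.

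For the characteristic polynomial, write
\begin{align*}
\det(x I-A)=x^4-\sigma_1x^3+\sigma_2x^2-\sigma_3x+\sigma_4,
\end{align*}
where $\sigma_k$ are the elementary symmetric polynomials of the $\lambda_i$. By definition $\sigma_4=\lambda_1\lambda_2\lambda_3\lambda_4=K$. I then apply Newton's identities successively.
\begin{itemize}
\item $\sigma_1=f_1=0$.
\item $2\sigma_2=\sigma_1f_1-f_2$, so $\sigma_2=-\tfrac12|A|^2$.
\item $3\sigma_3=\sigma_2f_1-\sigma_1f_2+f_3$, so $\sigma_3=\tfrac13f_3$.
\end{itemize}
Substituting these gives \eqref{polynomialofA}.

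For \eqref{f4K}, I use the fourth Newton identity
\begin{align*}
f_4-\sigma_1f_3+\sigma_2f_2-\sigma_3f_1+4\sigma_4=0.
\end{align*}
With $\sigma_1=f_1=0$ and $\sigma_2=-\tfrac12|A|^2$, it reduces to $f_4-\tfrac12|A|^4+4K=0$, which is \eqref{f4K}.

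As a cross-check, one can use Cayley--Hamilton instead. Each $\lambda_i$ satisfies $\lambda_i^4=\tfrac12|A|^2\lambda_i^2+\tfrac13f_3\lambda_i-K$. Summing over $i$ and using $f_1=0$ gives $f_4=\tfrac12|A|^4-4K$ again.

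There is no real obstacle here. The only points needing care are the sign conventions in Newton's identities and the observation that $\sigma_1=0$ kills the $x^3$ term. Since every identity is pointwise and the $\lambda_i$ are continuous symmetric functions of $A$, the identities hold globally on $M$ even where principal curvatures cross.
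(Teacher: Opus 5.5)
Your proof is correct and follows essentially the same route as the paper: both use Newton's identities to get $\sigma_1=0$, $\sigma_2=-\tfrac12|A|^2$, $\sigma_3=\tfrac13 f_3$, $\sigma_4=K$, and from these read off $f_4=\tfrac12|A|^4-4K$ and the characteristic polynomial. The only cosmetic difference is that you use the recursive form of Newton's identities (plus a Cayley--Hamilton cross-check), whereas the paper uses the closed-form expressions of $f_2,f_3,f_4$ in terms of the $\sigma_k$.
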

 \begin{proof} Let $\sigma_k$ be the $k$-th elementary symmetric polynomial of $\lambda_1,\cdots,\lambda_4$. Since $M$ is minimal,  $\sigma_1=f_1=0. $ Newton's identities give $|A|^2=f_2=\sigma_1^2-2\sigma_2=-2\sigma_2,$ and 
\begin{align*}
f_3=\sigma_1^3-3\sigma_1\sigma_2+3\sigma_3=3\sigma_3.
\end{align*}
 Moreover, 
\begin{align*}
f_4 = \sigma_1^4 - 4\sigma_1^2\sigma_2 + 4\sigma_1\sigma_3 + 2\sigma_2^2 - 4\sigma_4 = 2\sigma_2^2-4\sigma_4.
\end{align*} 
 Since $\sigma_2=-\frac12 |A|^2$ and $\sigma_4=K$, we obtain 
\begin{align*}
f_4=\frac12|A|^4-4K.
\end{align*} 
Finally, the characteristic polynomial is 
\begin{align*}
x^4-\sigma_1x^3+\sigma_2x^2-\sigma_3x+\sigma_4,
\end{align*} 
 which, using $
\sigma_1=0,\  \sigma_2=-\frac{|A|^2}{2}, \ \sigma_3=\frac{f_3}{3}, \  \sigma_4=K,$  becomes 
\begin{align*}
x^4-\frac{|A|^2}{2}x^2-\frac{f_3}{3}x+K. 
\end{align*} 
\end{proof}

\begin{lemma}\label{lem:curvature-formulas} Let $M \subset \mathbb S^5$ be minimal. Let $R, W, \operatorname{Ric}, \mathring{\operatorname{Ric}}$ be the scalar curvature, the Weyl curvature tensor, the Ricci curvature tensor and the trace-free of the Ricci curvature tensor of $M$. Then
\begin{align}
R=&12-|A|^2,\label{scalarcurv}\\
 |W|^2 =& \frac{7}{3}|A|^4-4f_4,\label{Weylcurv}\\
 |\operatorname{Ric}|^2 =& 36-6|A|^2+f_4,\label{Riccicurv}\\
 |\mathring{\operatorname{Ric}}|^2 =& f_4-\frac14 |A|^4.\label{TFRiccicurv}
\end{align}
 \end{lemma}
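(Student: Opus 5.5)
The plan is to compute everything directly from the Gauss equation \eqref{Gausseq} at a point, in an orthonormal frame diagonalizing the shape operator. Write $h_{ij}=\lambda_i\delta_{ij}$ with $\sum_i\lambda_i=0$. Then the curvature components reduce to
\begin{align*}
R_{ijij}=1+\lambda_i\lambda_j \quad (i\neq j),
\end{align*}
and all curvature components not of the forms $R_{ijij}$, $R_{ijji}$ (up to the symmetries) vanish.

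The scalar and Ricci curvatures come first. Contracting gives
\begin{align*}
R_{ii}=\sum_{j\ne i}(1+\lambda_i\lambda_j)=3+\lambda_i\Big(\sum_j\lambda_j-\lambda_i\Big)=3-\lambda_i^2 .
\end{align*}
Summing over $i$ yields $R=12-|A|^2$, which is \eqref{scalarcurv}. Squaring and summing gives
\begin{align*}
|\operatorname{Ric}|^2=\sum_i(3-\lambda_i^2)^2=36-6|A|^2+f_4,
\end{align*}
which is \eqref{Riccicurv}. Subtracting $R^2/4$ gives
\begin{align*}
|\mathring{\operatorname{Ric}}|^2=36-6|A|^2+f_4-\tfrac14\big(144-24|A|^2+|A|^4\big)=f_4-\tfrac14|A|^4,
\end{align*}
which is \eqref{TFRiccicurv}.

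For the Weyl tensor I will use the standard four-dimensional orthogonal decomposition
\begin{align*}
|Rm|^2=|W|^2+2|\mathring{\operatorname{Ric}}|^2+\tfrac16R^2 ,
\end{align*}
where $|Rm|^2=\sum_{i,j,k,l}R_{ijkl}^2$. This normalization, in dimension $n=4$, is the one that matches the Gauss--Bonnet--Chern integrand quoted in the introduction. Since each ordered pair $i\ne j$ contributes four nonzero entries, namely $R_{ijij}$ and $R_{ijji}$ from both orders,
\begin{align*}
|Rm|^2=4\sum_{i<j}(1+\lambda_i\lambda_j)^2=4\Big(6+2\sum_{i<j}\lambda_i\lambda_j+\sum_{i<j}\lambda_i^2\lambda_j^2\Big).
\end{align*}
Next I use $\sum_{i<j}\lambda_i\lambda_j=\sigma_2=-\tfrac12|A|^2$ and $\sum_{i<j}\lambda_i^2\lambda_j^2=\tfrac12(|A|^4-f_4)$. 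These give
\begin{align*}
|Rm|^2=24-4|A|^2+2|A|^4-2f_4 .
\end{align*}
Subtracting $2|\mathring{\operatorname{Ric}}|^2+\tfrac16(12-|A|^2)^2$ makes the constant and $|A|^2$ terms cancel, leaving
\begin{align*}
|W|^2=\Big(2-\tfrac12... \Big)
\end{align*}
More precisely, the $|A|^4$ coefficient is $2+\tfrac12-\tfrac16=\tfrac73$ and the $f_4$ coefficient is $-2-2=-4$. This gives \eqref{Weylcurv}.

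The only real obstacle is getting the normalization of $|Rm|^2$ and of the decomposition consistent, meaning whether one sums over all indices or only over $i<j$, $k<l$. I would fix this by checking against a known case. The flat-type check $\lambda=0$ should give $|W|^2=0$: indeed $|Rm|^2=24=R^2/6$. The Clifford torus $\mathbb S^1\times\mathbb S^3$ should be conformally flat, with $\lambda=(\sqrt3,-1/\sqrt3,-1/\sqrt3,-1/\sqrt3)$, $|A|^2=4$ and $f_4=28/3$; then $\tfrac73\cdot16-4\cdot\tfrac{28}{3}=0$, as required.
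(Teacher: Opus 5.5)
Your proposal is correct and follows the same route as the paper, which proves the lemma only by citing ``a direct computation'' from the Gauss equation \eqref{Gausseq} and the standard curvature decomposition. Your principal-frame computation ($R_{ii}=3-\lambda_i^2$, $|Rm|^2=24-4|A|^2+2|A|^4-2f_4$, and $|Rm|^2=|W|^2+2|\mathring{\operatorname{Ric}}|^2+\tfrac16R^2$, which matches the Gauss--Bonnet--Chern normalization used later) supplies exactly those details. The only blemish is the unfinished display ``$|W|^2=(2-\tfrac12\dots)$''. Delete it, since the coefficient $2+\tfrac12-\tfrac16=\tfrac73$ you give right after it is the correct one.
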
 
\begin{proof} A direct computation by using the Gauss equation \eqref{Gausseq} and the formulas of Weyl curvature, Ricci curvature.
\end{proof}
 \begin{corollary}\label{cor:endpoint-balanced} For a minimal hypersurface \(M^4\subset\mathbb S^5\), one has 
\begin{align}\label{f_4range}
\frac14|A|^4\le f_4\le \frac{7}{12}|A|^4.
\end{align} 
 Moreover, 
\begin{align}
f_4=\frac14|A|^4 \quad\Longleftrightarrow&\quad M\text{ is Einstein},\label{Einstein}\\
 f_4=\frac{7}{12}|A|^4 \quad\Longleftrightarrow&\quad M\text{ is locally conformally flat.}
\end{align}
The Euler-balanced condition $ |W|^2=2|\mathring{\operatorname{Ric}}|^2$ is equivalent to $f_4=\frac{17}{36}|A|^4.$ \end{corollary}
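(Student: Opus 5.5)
The corollary is pure algebra in the formulas of Lemma \ref{lem:curvature-formulas}, so the plan is to substitute \eqref{Weylcurv} and \eqref{TFRiccicurv} and reduce each claim to a statement about $f_4$ and $|A|^2$.

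\emph{The range \eqref{f_4range}.} Since $|\mathring{\operatorname{Ric}}|^2\ge0$, \eqref{TFRiccicurv} gives $f_4\ge\frac14|A|^4$ at once. Likewise $|W|^2\ge0$ together with \eqref{Weylcurv} gives $4f_4\le\frac73|A|^4$, that is, $f_4\le\frac7{12}|A|^4$.

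\emph{The endpoint characterizations.} Equality $f_4=\frac14|A|^4$ holds exactly when $|\mathring{\operatorname{Ric}}|^2=0$, i.e. $\mathring{\operatorname{Ric}}=0$. In dimension $4\ge3$ this is the Einstein condition; the ratio $R/4$ is then automatically constant by the contracted Bianchi identity on the connected manifold, and in any case Einstein means $\mathring{\operatorname{Ric}}=0$ pointwise. Equality $f_4=\frac7{12}|A|^4$ holds exactly when $W=0$, which in dimension $4$ is local conformal flatness by the Weyl--Schouten theorem.

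\emph{The Euler-balanced condition.} Substituting into $|W|^2=2|\mathring{\operatorname{Ric}}|^2$ gives
\begin{align*}
\frac73|A|^4-4f_4=2f_4-\frac12|A|^4 .
\end{align*}
Hence $6f_4=\frac{17}{6}|A|^4$, i.e. $f_4=\frac{17}{36}|A|^4$. Each step is reversible, so the two conditions are equivalent.

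There is no real obstacle, since everything rests on Lemma \ref{lem:curvature-formulas}. If its formulas must be checked, I would work in a principal frame. There Gauss gives $R_{ijij}=1+\lambda_i\lambda_j$, so $\operatorname{Ric}_{ii}=3-\lambda_i^2$ and $\operatorname{Ric}$ is diagonal. Then $|\operatorname{Ric}|^2=\sum_i(3-\lambda_i^2)^2=36-6|A|^2+f_4$. Also $|Rm|^2=4\sum_{i<j}(1+\lambda_i\lambda_j)^2=24-4|A|^2+2(|A|^4-f_4)$, using $\sum_{i<j}\lambda_i\lambda_j=-\frac12|A|^2$ and $\sum_{i<j}\lambda_i^2\lambda_j^2=\frac12(|A|^4-f_4)$. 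Finally, apply $|W|^2=|Rm|^2-2|\operatorname{Ric}|^2+\frac13R^2$ in dimension 4.
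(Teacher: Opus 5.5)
Your proposal is correct and matches the paper's proof: the bounds and endpoint characterizations come from $|\mathring{\operatorname{Ric}}|^2\ge0$ and $|W|^2\ge0$ in Lemma~\ref{lem:curvature-formulas}, and the Euler-balanced equivalence comes from solving the resulting linear equation for $f_4$. Your principal-frame check of Lemma~\ref{lem:curvature-formulas}, which the paper leaves as a direct computation, is also correct.
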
 
\begin{proof}
By Lemma \ref{lem:curvature-formulas},
$
|\mathring{\operatorname{Ric}}|^2=f_4-\frac14|A|^4.
$
Thus $f_4\ge \frac14|A|^4$, and equality holds if and only if
$\mathring{\operatorname{Ric}}=0$, namely $M$ is Einstein.

Similarly,
$
|W|^2=\frac73|A|^4-4f_4.
$
Hence $f_4\le \frac{7}{12}|A|^4$, and equality holds if and only if $W=0$. Since $\dim M=4$, $W=0$ is equivalent to local conformal flatness.

Finally,
$
|W|^2=2|\mathring{\operatorname{Ric}}|^2
$
is equivalent to
$
\frac73|A|^4-4f_4
=
2\left(f_4-\frac14|A|^4\right),
$
which gives
$
f_4=\frac{17}{36}|A|^4.
$
\end{proof}
\begin{lemma}\label{lem:GBC} Let \(M^4\subset\mathbb S^5\) be a closed minimal hypersurface. Then the Gauss--Bonnet--Chern formula becomes
\begin{align*} 
16\pi^2\chi(M) = \int_M \left( \frac32|A|^4-3f_4-2|A|^2+12 \right)\,dV.
\end{align*} 
 In particular, if $M$ is Euler-balanced,   then 
\begin{align}\label{GBC-Euler}
 \chi(M) = \frac{1}{192\pi^2} \int_M(|A|^2-12)^2\,dV. 
\end{align}  
\end{lemma}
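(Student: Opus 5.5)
Starting from the four-dimensional Gauss--Bonnet--Chern formula quoted in the introduction,
\begin{align*}
32\pi^2\chi(M)=\int_M \left( |W|^2 -2|\mathring{\operatorname{Ric}}|^2+\frac16R^2 \right)dV,
\end{align*}
I will substitute the expressions from Lemma \ref{lem:curvature-formulas}. This reduces the first claim to a pointwise algebraic identity in $|A|^2$ and $f_4$. By \eqref{Weylcurv} and \eqref{TFRiccicurv},
\begin{align*}
|W|^2-2|\mathring{\operatorname{Ric}}|^2=\frac73|A|^4-4f_4-2f_4+\frac12|A|^4=\frac{17}{6}|A|^4-6f_4 .
\end{align*}
By \eqref{scalarcurv},
\begin{align*}
\frac16R^2=\frac16(12-|A|^2)^2=24-4|A|^2+\frac16|A|^4 .
\end{align*}
Adding these gives an integrand of $3|A|^4-6f_4-4|A|^2+24$. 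Dividing both sides by $2$ then yields
\begin{align*}
16\pi^2\chi(M)=\int_M\left(\frac32|A|^4-3f_4-2|A|^2+12\right)dV,
\end{align*}
which is the first formula.

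For the Euler-balanced case I use Corollary \ref{cor:endpoint-balanced}, which gives $f_4=\frac{17}{36}|A|^4$. Substituting this, the integrand becomes
\begin{align*}
\frac32|A|^4-\frac{17}{12}|A|^4-2|A|^2+12=\frac1{12}|A|^4-2|A|^2+12=\frac1{12}(|A|^2-12)^2 .
\end{align*}
Dividing by $16\pi^2$ produces the constant $\frac{1}{192\pi^2}$, and this is \eqref{GBC-Euler}. As a consistency check, the same result follows from \eqref{Euler-balanced-total-curvature} together with $R=12-|A|^2$.

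The one step that needs care is the normalization of the Gauss--Bonnet--Chern integrand, namely the coefficient $\frac16R^2$ against $|W|^2-2|\mathring{\operatorname{Ric}}|^2$ in the convention $|W|^2=\sum W_{ijkl}^2$. I would confirm it on the round $\mathbb S^4$, where $W=0$, $\mathring{\operatorname{Ric}}=0$ and $R=12$. There the right-hand side is $\frac16\cdot144\cdot\frac{8\pi^2}{3}=64\pi^2$, which equals $32\pi^2\cdot 2$, as it should since $\chi(\mathbb S^4)=2$. This check fixes the constant; everything else is the routine algebra above.
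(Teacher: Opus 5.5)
Your proof is correct and is essentially the paper's argument. The paper substitutes Lemma \ref{lem:curvature-formulas} into the equivalent form $16\pi^2\chi(M)=\int_M(\frac{R^2}{3}-|\operatorname{Ric}|^2+\frac12|W|^2)\,dV$, which matches your starting formula via $|\operatorname{Ric}|^2=|\mathring{\operatorname{Ric}}|^2+\frac14R^2$; note only that your round $\mathbb S^4$ test checks the $R^2$ coefficient alone (there $W=0$ and $\mathring{\operatorname{Ric}}=0$), not the $|W|^2$ convention you raised, though this is no gap since you are quoting the standard formula.
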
 
\begin{proof} For a closed four-dimensional Riemannian manifold, the Gauss--Bonnet--Chern formula can be written as (cf. \cite{Bes87}) 
\begin{align*}
16\pi^2\chi(M) = \int_M \left( \frac{R^2}{3} - | \operatorname{Ric} |^2 + \frac{|W|^2}{2} \right)\,dV.
\end{align*}
 Substituting   identities in Lemma \ref{lem:curvature-formulas} gives 
 \begin{align*}
\frac{R^2}{3} - |\operatorname{Ric}|^2 + \frac{|W|^2}{2} &= \frac{(12-|A|^2)^2}{3} - (36-6|A|^2+f_4) + \frac12\left(\frac73|A|^4-4f_4\right) \\ &= \frac32|A|^4-3f_4-2|A|^2+12. 
\end{align*}
 If $M$ is Euler-balanced, we have $ f_4=\frac{17}{36}|A|^4, $  then we obtain \eqref{GBC-Euler}.
\end{proof}
 \begin{lemma}\label{lem:euler-balanced-algebra} Let \(\lambda_1,\cdots,\lambda_4\) be the principal curvatures of a minimal hypersurface $M\subset\mathbb S^5$ at a point. If $ f_4=\frac{17}{36}|A|^4, $ then $ K=\frac{|A|^4}{144}.$
 Consequently, if $|A|^2>0$, then $K>0$, and exactly two principal curvatures are negative and two are positive. Moreover, at such a point with $|A|^2>0$, the multiplicity types $2+2$ and $3+1$ cannot occur. Hence either the four principal curvatures are distinct, or there is exactly one double principal curvature. 
\end{lemma}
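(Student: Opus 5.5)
The plan is to combine the Newton identity \eqref{f4K} from Lemma \ref{lem:newton-identities} with the Euler-balanced relation, and then run a short sign and multiplicity analysis on the four real principal curvatures, using only the constraint $\sum_i\lambda_i=0$.

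First, I substitute $f_4=\frac{17}{36}|A|^4$ into $f_4=\frac12|A|^4-4K$. This gives
\[
4K=\left(\tfrac12-\tfrac{17}{36}\right)|A|^4=\tfrac{1}{36}|A|^4,
\]
that is, $K=\frac{|A|^4}{144}$. At a point with $|A|^2>0$ this yields $K=\lambda_1\lambda_2\lambda_3\lambda_4>0$, so every $\lambda_i$ is nonzero and the number of negative ones is even. The options are $0$, $2$ or $4$. If all four $\lambda_i$ had the same sign, $\sum_i\lambda_i=0$ would force them all to vanish, which is impossible. Hence exactly two principal curvatures are positive and two are negative.

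Next I rule out the forbidden multiplicity types at a point with $|A|^2>0$. A single value of multiplicity $4$ must be $0$ by minimality, which contradicts $|A|^2>0$.

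\emph{Type $3+1$.} Minimality forces the values $a,a,a,-3a$ with $a\neq0$. Then $K=-3a^4<0$, contradicting $K>0$. This also contradicts the two-plus-two sign pattern.

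\emph{Type $2+2$.} The values are $a,a,b,b$ with $a\ne b$, and minimality gives $b=-a$. Then $|A|^2=4a^2$ and $K=a^4$. The identity $K=|A|^4/144$ becomes $a^4=16a^4/144=a^4/9$, which forces $a=0$, again a contradiction.

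The remaining partitions of $4$ are $1+1+1+1$ and $2+1+1$. So either all four principal curvatures are distinct, or exactly one of them is a double root.

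There is no real obstacle here; the argument is elementary algebra. The only points requiring care are making the list of multiplicity patterns exhaustive and remembering that minimality pins down the values in the $3+1$ and $2+2$ cases.
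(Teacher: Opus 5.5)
Your proof is correct and follows the paper's argument. Newton's identity gives $K=|A|^4/144$, the sign count follows from $K>0$ and $\sum_i\lambda_i=0$, and the $3+1$ and $2+2$ types are ruled out by evaluating each explicit configuration; the only cosmetic difference is that you use $K$ (e.g.\ $K=-3a^4<0$, or $a^4=a^4/9$), whereas the paper compares $f_4/|A|^4$ with the endpoint values $7/12$ and $1/4$.
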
 
\begin{proof} The assumption $f_4=\frac{17}{36}|A|^4$  and Lemma \ref{lem:newton-identities}   give  $ K=\frac{|A|^4}{144}. $ 
If $|A|^2>0$, then $K>0$. Since $ \sum_i\lambda_i=0, $ the four nonzero numbers $\lambda_i$ cannot all have the same sign. Since their product is positive, the number of negative ones is even. Hence exactly two are negative and two are positive. It remains to exclude the multiplicity types. If the type is $2+2$, then, by minimality, the principal curvatures are $ a,a,-a,-a $ with $a\ne0$. Then 
\begin{align*}
 \frac{f_4}{|A|^4} = \frac{4a^4}{(4a^2)^2} = \frac14,
\end{align*}
which contradicts $\frac{f_4}{|A|^4}=\frac{17}{36}. $
 If the type is $3+1$, then the principal curvatures are $a,a,a,-3a$ with $a\ne0$. Then 
\begin{align*}
\frac{f_4}{|A|^4} = \frac{3a^4+81a^4}{(3a^2+9a^2)^2} = \frac{84}{144} = \frac{7}{12},
\end{align*} 
again a contradiction. Therefore the only possible multiplicities are the distinct-root case and the case of exactly one double root. \end{proof} 

\begin{lemma}\label{lem:S12_f4_68_algebra} Assume that real numbers $\lambda_1,\cdots,\lambda_4$ satisfy
\begin{align*}
 \sum_i\lambda_i=0, \quad \sum_i\lambda_i^2=12, \quad \prod_i\lambda_i=1.
\end{align*} 
Set $f_3:=\sum_i\lambda_i^3. $ Then  $|f_3|\le 8\sqrt{2\sqrt3}.$ Moreover, 
\begin{itemize}
\item  if $|f_3|< 8\sqrt{2\sqrt3}$, then the four roots are distinct.
\item if $f_3=8\sqrt{2\sqrt3}$, then, after ordering,   $\lambda_1=\lambda_2<0<\lambda_3<\lambda_4.$
\item if $f_3=-8\sqrt{2\sqrt3}$, then, after ordering, $ \lambda_1<\lambda_2<0<\lambda_3=\lambda_4. $ 
\end{itemize}

  \end{lemma}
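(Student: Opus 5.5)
By \eqref{polynomialofA} in Lemma \ref{lem:newton-identities}, the $\lambda_i$ are exactly the four real roots of
\begin{align*}
p(x)=x^4-6x^2-cx+1,\qquad c:=\frac{f_3}{3}.
\end{align*}
So the statement becomes a question about a one-parameter family of quartics: for which $c$ does $p$ have four real roots, and what is the root pattern at the extreme values? The approach is to view $c$ as the slope of a line, writing $p(x)=0$ as $q(x)=cx$ with $q(x):=x^4-6x^2+1$. Since $q(0)=1\neq0$, no root is zero, so $p(x)=0$ is equivalent to $g(x)=c$ with
\begin{align*}
g(x):=\frac{q(x)}{x}=x^3-6x+\frac1x, \qquad x\neq 0.
\end{align*}
Four real roots (with multiplicity) mean that the horizontal line at height $c$ meets the graph of $g$ in four points counted with multiplicity.

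First I analyze $g$ on each half-line. We have
\begin{align*}
g'(x)=3x^2-6-\frac1{x^2}=\frac{3x^4-6x^2-1}{x^2}.
\end{align*}
Its zeros satisfy $x^2=1+\frac{2}{\sqrt3}=:s$, so on $x>0$ there is a single critical point $x_0=\sqrt s$. Since $g\to+\infty$ as $x\to0^+$ and as $x\to+\infty$, $g$ decreases and then increases on $(0,\infty)$, with minimum value $m:=g(x_0)$. By oddness of $g$, on $(-\infty,0)$ it increases to a maximum $-m$ at $-x_0$ and then decreases to $-\infty$ as $x\to 0^-$.

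Hence $g(x)=c$ has at most two solutions on each side. Four real roots force two on each side, so we need $c\ge m$ and $c\le -m$, which is possible only when $m\le 0$ and $|c|\le -m$. This is precisely the required bound on $|f_3|=3|c|$, once $m$ is evaluated.

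The main computational step is evaluating $m$. Using $x_0^4=2x_0^2+\tfrac13$, I get
\begin{align*}
g(x_0)=\frac{x_0^4-6x_0^2+1}{x_0}=\frac{-4s+\frac43}{\sqrt s}.
\end{align*}
With $s=1+\frac2{\sqrt3}$, this gives $-4s+\frac43=-\frac{8}{3}-\frac{8}{\sqrt3}=-\frac{8(1+\sqrt3)}{3}$. Squaring,
\begin{align*}
m^2=\frac{64(1+\sqrt3)^2}{9}\cdot\frac{\sqrt3}{\sqrt3+2}=\frac{64\cdot 2\sqrt3}{9},
\end{align*}
using $(1+\sqrt3)^2=2(2+\sqrt3)$. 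Hence $|m|=\frac{8}{3}\sqrt{2\sqrt3}$, and the bound becomes $|f_3|=3|c|\le 8\sqrt{2\sqrt3}$, as claimed.

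For the equality cases, suppose first $|f_3|<8\sqrt{2\sqrt3}$, so that $m<c<-m$. Then each half-line contains two simple solutions, since a double root of $p$ would be a critical point of $g$ at height $\pm m$. So the four roots are distinct.

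If $f_3=8\sqrt{2\sqrt3}$, then $c=-m>0$. On the positive side the line $g=c$ lies strictly above the minimum $m<0$, giving two simple positive roots. On the negative side it touches the maximum at $-x_0$, giving a double negative root. After ordering this is $\lambda_1=\lambda_2<0<\lambda_3<\lambda_4$.

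If $f_3=-8\sqrt{2\sqrt3}$, then $c=m$. By the symmetry $x\mapsto -x$, which sends $c\mapsto -c$, this gives $\lambda_1<\lambda_2<0<\lambda_3=\lambda_4$.
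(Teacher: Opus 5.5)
Your argument is correct, but it reaches the bound by a different route than the paper.

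\textbf{The paper's route.} The paper argues variationally. The constraint set is compact, so $f_3$ attains its extrema. At an extremum with four distinct entries, the constraint gradients are independent, and the Lagrange condition would force the cubic $3x^3-2\nu x^2-\mu x-\rho$ to have four distinct roots. Hence every extremal configuration has a multiple root $\alpha$. Solving $p(\alpha)=p'(\alpha)=0$ gives $\alpha^2=1+\frac{2}{\sqrt3}$ and $|f_3|=8\sqrt{2\sqrt3}$ there. The three bullet points are then read off from this double-root computation together with the sign count.

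\textbf{Your route.} You study the single function $g(x)=x^3-6x+1/x$, whose critical points $\pm x_0$ are exactly the paper's $\pm\alpha$. Your remark that a double root of $p$ is a critical point of $g$ at height $\pm m$ is the paper's step $p(\alpha)=p'(\alpha)=0$. You then obtain the bound from the shape of the graph on each half-line, rather than from compactness and Lagrange multipliers. This gains you several things:
\begin{itemize}
\item no constraint-qualification check is needed;
\item you get the exact root configuration for every value of $c$, so the bound is attained and every value of $f_3$ in $[-8\sqrt{2\sqrt3},8\sqrt{2\sqrt3}]$ actually occurs (by Vieta, the roots of $p$ automatically satisfy the three constraints);
\item the simplicity of the two positive roots when $f_3=8\sqrt{2\sqrt3}$ comes out directly, whereas the paper has to infer it from the fact that any multiple root must equal $-\sqrt{1+2/\sqrt3}$.
\end{itemize}

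\textbf{One step to justify.} Your claim that ``four real roots force two on each side'' counts multiplicities, so you need that a tangency contributes multiplicity exactly two. Either of the following suffices.
\begin{itemize}
\item Note that $g''(x)=6x+2/x^3>0$ on $(0,\infty)$. So $x_0$ is a zero of order exactly two of $g-c$, hence of $p=x(g-c)$; oddness of $g$ handles $-x_0$.
\item More simply, $\prod_i\lambda_i=1>0$ and $\sum_i\lambda_i=0$ already force exactly two negative and two positive $\lambda_i$, counted with multiplicity.
\end{itemize}
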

 \begin{proof}Assume $\sigma_k$ is the $k$-th symmetric function of $\lambda_1, \cdots, \lambda_4$. Newton's identities give 
\begin{align*}
\sigma_2=-6, \quad \sigma_3=\frac{f_3}{3}, \quad \sigma_4=1. 
\end{align*}
 Therefore the characteristic polynomial is $ x^4-6x^2-\frac{f_3}{3}x+1.$  It remains to determine the possible range of $f_3$. The set of quadruples 
\begin{align*}
\left\{ (\lambda_1,\cdots,\lambda_4)\in\mathbb R^4: \sum_i\lambda_i=0,\, \sum_i\lambda_i^2=12,\, \prod_i\lambda_i=1 \right\}
\end{align*} 
is compact. Hence $f_3$ attains its maximum and minimum. 
We claim that at an extremal point the four numbers
\(\lambda_1,\ldots,\lambda_4\) cannot be pairwise distinct.

Indeed, consider the functions
\begin{align*}
 F(\lambda)=\sum_{i=1}^4\lambda_i^3,\qquad
    G_1(\lambda)=\sum_{i=1}^4\lambda_i,\qquad
    G_2(\lambda)=\sum_{i=1}^4\lambda_i^2,\qquad
    G_3(\lambda)=\prod_{i=1}^4\lambda_i .
\end{align*}
The constraints are
$
    G_1=0,\  G_2=12,\ G_3=1.
$
Suppose, for contradiction, that at an extremal point the four
$\lambda_i$'s are pairwise distinct.  We first note that the gradients
$
    \nabla G_1,\ \nabla G_2,\  \nabla G_3
$
are linearly independent at this point.  In fact, if
$
    a\nabla G_1+b\nabla G_2+c\nabla G_3=0,
$
then for each $i$,
\begin{align*}
  a+2b\lambda_i+c\prod_{j\neq i}\lambda_j=0.
\end{align*} 
Since $\prod_i\lambda_i=1$, we have
$
    \prod_{j\neq i}\lambda_j=\frac1{\lambda_i}.
$
Thus
$
    a+2b\lambda_i+\frac{c}{\lambda_i}=0.
$
Multiplying by \(\lambda_i\), we get
\begin{align*}
    2b\lambda_i^2+a\lambda_i+c=0,
    \qquad i=1,\ldots,4.
\end{align*} 
Hence the four distinct numbers $\lambda_1,\ldots,\lambda_4$ are roots of the quadratic polynomial
$
    2bx^2+ax+c.
$
This is impossible unless this polynomial is identically zero.  Therefore
$
    a=b=c=0.
$
So the constraint gradients are linearly independent.

By the Lagrange multiplier method, there exist constants
\(\mu,\nu,\rho\in\mathbb R\) such that 
\begin{align*}
    \nabla F
    =
    \mu\nabla G_1+\nu\nabla G_2+\rho\nabla G_3.
\end{align*} 
Here $\nabla$ stands for the gradient of a function with respect to the variables $\lambda_1,\cdots, \lambda_4$.
For each $i$, this gives
$
    3\lambda_i^2
    =
    \mu+2\nu\lambda_i+\rho\prod_{j\neq i}\lambda_j.
$
Again using \(\prod_i\lambda_i=1\), we obtain
$
    3\lambda_i^2
    =
    \mu+2\nu\lambda_i+\frac{\rho}{\lambda_i}.
$
Multiplying by $\lambda_i$, we get
$
  3\lambda_i^3-2\nu\lambda_i^2-\mu\lambda_i-\rho=0.
$
Thus  the cubic polynomial equation
$
    3x^3-2\nu x^2-\mu x-\rho=0
$
has four distinct roots: $\lambda_1, \cdots, \lambda_4$.  This is impossible.

 Thus an extremal quadruple has a multiple root. Let $\alpha$ be a multiple root of $p(x)=x^4-6x^2-\frac{f_3}{3}x+1.$ Then
\begin{align*}
p (\alpha)=0, \qquad p'(\alpha)=0.
\end{align*}
Since $p'(x)=4x^3-12x-\frac{f_3}{3},$ we get 
\begin{align}
f_3=12\alpha^3-36\alpha=12\alpha(\alpha^2-3). \label{f_3eq}
\end{align}  
Substituting this into $p(\alpha)=0$ gives $ -3\alpha^4+6\alpha^2+1=0. $ 
Thus $ \alpha^2=1+\frac{2\sqrt3}{3}.$ By \eqref{f_3eq}, we have $|f_3|=8\sqrt{2\sqrt3}$
 at any extremal point.   If $|f_3|<8\sqrt{2\sqrt3}$, then $p(x)$ has no multiple root, and hence the four \(\lambda_i\)'s are distinct. If $f_3=8\sqrt{2\sqrt3}$, then $\alpha<0$ in \eqref{f_3eq}, so the double root is negative. Since $\prod_i\lambda_i=1>0$ and $ \sum_i\lambda_i=0, $ the signs are two negative and two positive; hence, after ordering, $ \lambda_1=\lambda_2<0<\lambda_3<\lambda_4. $ Similarly, if $f_3=-8\sqrt{2\sqrt3}$, then the double root is positive, and $ \lambda_1<\lambda_2<0<\lambda_3=\lambda_4. $
 \end{proof} 
We shall also use the following standard differential identities. Simons' identity \cite{Sim68} gives
\begin{align}
\frac12\Delta |A|^2 = |\nabla A|^2+(4-|A|^2)|A|^2.\label{Simonsid}
\end{align} 


We record the following Lewy--Gleason--Wolff lemma for Hessian determinants of harmonic functions:
\begin{lemma}[\cite{GleWol91}]\label{lem:LGW}
Let $u$ be a harmonic function on a domain in $\mathbb R^n$. If $\det\nabla^2u$ does not change sign in the domain, then either $\det\nabla^2u$ is identically zero, or it is nowhere zero.
\end{lemma}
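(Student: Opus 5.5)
Since this lemma is quoted from \cite{GleWol91}, we would not reprove it in full within the paper. The argument we would follow is the classical one, which rests on real analyticity and a leading-order analysis at a zero of the Hessian determinant.

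\textbf{Reduction.} Without loss of generality assume $\det\nabla^2u\ge0$ on the connected domain $\Omega$. Suppose it vanishes at some $x_0$ but not identically. Since $u$ is harmonic, it is real analytic, and so is $D(x):=\det\nabla^2u(x)$. By the identity theorem, $D\not\equiv0$ near $x_0$. Hence the Taylor expansion of $D$ at $x_0$ has a lowest-order nonzero homogeneous term $P(y)$, of some degree $m\ge1$. The sign assumption forces $P\ge0$ on $\mathbb R^n$, so $m$ is even and $m\ge2$. The goal is to contradict this.

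\textbf{Structure of the leading term.} Write $H_0=\nabla^2u(x_0)$, which is trace-free and singular, and expand
\begin{align*}
\nabla^2u(x_0+y)=H_0+\nabla^2q_k(y)+\cdots,
\end{align*}
where $q_k$ is a homogeneous harmonic polynomial.

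\emph{Case $H_0=0$.} Then $P=\det\nabla^2q_k$, which is the Hessian determinant of a harmonic polynomial.

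\emph{Case $H_0\neq0$.} Diagonalize $H_0$ and split $\mathbb R^n=\operatorname{Im}H_0\oplus\ker H_0$. The leading term of $D$ is then the nonzero constant $\det(H_0|_{\operatorname{Im}})$ times the lowest-order term of the Schur complement on $\ker H_0$. To leading order, that Schur complement is the kernel block of $\nabla^2$ of the relevant harmonic terms, which is a trace-free-type object in the kernel variables only after averaging.

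In either case, we would reduce to the following statement: \emph{the Hessian determinant of a nonzero homogeneous harmonic polynomial, or the appropriate minor in the degenerate case, cannot be a nonnegative polynomial that is not identically zero.}

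\textbf{Key step and main obstacle.} This last claim is where the real difficulty lies. The first tool we would try is the divergence (null-Lagrangian) structure
\begin{align*}
\det\nabla^2 v=\frac1n\,\partial_i\big(\partial_j v\,\operatorname{cof}(\nabla^2v)_{ij}\big),
\end{align*}
combined with harmonicity. This should show that $\int_{\mathbb S^{n-1}}\det\nabla^2 v\,d\sigma$, suitably weighted, must vanish. Since the polynomial is nonnegative, it would then have to vanish identically, giving the contradiction.

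For the degenerate case $H_0\neq0$, the obstacle is controlling the cross terms between $\operatorname{Im}H_0$ and $\ker H_0$ in the Schur complement. To handle this we would induct on $\dim\ker H_0$, and at each stage use that the restriction of a harmonic function's Hessian to the kernel has trace equal to minus the trace on the image, which vanishes to higher order.

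If the averaging argument fails to close, we would fall back on Lewy's original $n=3$ approach in each two-dimensional slice, since the paper applies the lemma only locally.
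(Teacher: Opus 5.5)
The paper gives no proof of this lemma; it only quotes it from the literature. The real problem is that the statement as formulated is false. Your outline therefore cannot be completed, and it breaks exactly at your key step.

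In $\mathbb R^2$, $u=x_1^3-3x_1x_2^2$ is harmonic, and $\det\nabla^2u=-36(x_1^2+x_2^2)$ has one sign, vanishes at the origin, and is not identically zero. In $\mathbb R^4$, the dimension where the paper uses the lemma, take
\begin{equation*}
u=x_1^3-3x_1x_2^2+x_3^3-3x_3x_4^2,\qquad \det\nabla^2u=1296\,(x_1^2+x_2^2)(x_3^2+x_4^2)\ge0 .
\end{equation*}
So a nonzero homogeneous harmonic cubic can have a nonnegative Hessian determinant that is not identically zero. This refutes the claim you reduce to in the case $H_0=0$, and it does so in the lowest degree $m+2=3$ allowed in the paper's application. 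Similarly, $u=x_1^2-x_2^2+x_3^3-3x_3x_4^2$ has $H_0=\operatorname{diag}(2,-2,0,0)\neq0$ and $\det\nabla^2u=144(x_3^2+x_4^2)\ge0$. Hence no induction on $\dim\ker H_0$ can rescue the degenerate case either.

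It is also easy to see why the null-Lagrangian averaging yields nothing. For $v$ homogeneous of degree $d\ge2$, Euler's relation $\nabla^2v\,x=(d-1)\nabla v$ gives $\operatorname{cof}(\nabla^2v)\,\nabla v=\frac{\det\nabla^2v}{d-1}\,x$. Integrating your divergence identity over the unit ball then gives only
\begin{equation*}
\int_{B_1}\det\nabla^2v\,dx=\frac{1}{n(d-1)}\int_{\mathbb S^{n-1}}\det\nabla^2v\,d\sigma .
\end{equation*}
This is just integration in polar coordinates: it holds for every homogeneous $v$ and never uses $\Delta v=0$. Moreover, no positively weighted spherical average can vanish for the cubic above, since its Hessian determinant is positive off two planes.

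The fallback to two-dimensional slices also fails. Restrictions of $u$ to $2$-planes are not harmonic, and the planar version of the statement is false by the first example.

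What the paper's application actually requires is not sign information but the much stronger identity $\det\nabla^2P=\frac1{144}|\nabla^2P|^4$. This pins the eigenvalues of $\nabla^2P$ to the Euler-balanced configuration wherever $\nabla^2P\neq0$. The cubic above violates it, since $36\,ab\neq(a+b)^2$ in general for $a=x_1^2+x_2^2$, $b=x_3^2+x_4^2$. Neither your proposal nor the lemma as quoted uses this structure.
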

 
The following estimate is a standard form of the layer argument used by de Almeida--Brito \cite{deABri90}.
\begin{lemma}\label{lem:layer_estimate}
Let $u\in C^\infty(M)$ be a smooth function on a closed Riemannian manifold $M$.  Let
\begin{equation*}
   m=\min_M u,\qquad M_0=\max_M u.
\end{equation*}
Then
\begin{equation*}
    \lim_{\varepsilon\to0^+}
    \frac1\varepsilon
    \int_{\{m\le u\le m+\varepsilon\}}
    |\nabla u|^2\,dV
    =
    0,
\end{equation*}
and
\begin{equation*}
    \lim_{\varepsilon\to0^+}
    \frac1\varepsilon
    \int_{\{M_0-\varepsilon\le u\le M_0\}}
    |\nabla u|^2\,dV
    =
    0.
\end{equation*}
\end{lemma}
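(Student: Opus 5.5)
We prove the first limit; the second follows by applying it to $-u$, since $\max_M u=-\min_M(-u)$ and the sets $\{M_0-\varepsilon\le u\le M_0\}$ and $\{-M_0\le -u\le -M_0+\varepsilon\}$ coincide. The idea is to trade the integral of $|\nabla u|^2$ over the thin layer for an integral of $\Delta u$, using integration by parts against a truncated linear function of $u$. We then show that the resulting limit is an integral of $\Delta u$ over the minimum set, where $\Delta u$ vanishes almost everywhere.

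\textbf{Integration by parts.} Fix $\varepsilon>0$ and let $\psi_\varepsilon(t)=(m+\varepsilon-t)_+$. This function is Lipschitz, so $\psi_\varepsilon(u)\in W^{1,\infty}(M)$, and
\[
\nabla\bigl(\psi_\varepsilon(u)\bigr)=-\mathbf 1_{\{u<m+\varepsilon\}}\nabla u \quad \text{a.e.}
\]
Because $M$ is closed, integrating by parts gives
\[
\int_M \psi_\varepsilon(u)\,\Delta u\,dV=-\int_M \langle\nabla\psi_\varepsilon(u),\nabla u\rangle\,dV=\int_{\{u<m+\varepsilon\}}|\nabla u|^2\,dV .
\]
The sets $\{u<m+\varepsilon\}$ and $\{m\le u\le m+\varepsilon\}$ differ only by $\{u=m+\varepsilon\}$. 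On that level set $\nabla u=0$ almost everywhere, by the standard fact that the gradient of a Sobolev function vanishes a.e. on any level set. Hence the two layer integrals of $|\nabla u|^2$ agree, and
\[
\frac1\varepsilon\int_{\{m\le u\le m+\varepsilon\}}|\nabla u|^2\,dV=\int_M \frac{(m+\varepsilon-u)_+}{\varepsilon}\,\Delta u\,dV .
\]

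\textbf{Passing to the limit.} The integrand on the right is bounded by $|\Delta u|\in L^1(M)$, because $0\le (m+\varepsilon-u)_+/\varepsilon\le 1$ wherever it is nonzero, given that $u\ge m$. As $\varepsilon\to0^+$ the factor $(m+\varepsilon-u)_+/\varepsilon$ converges pointwise to $\mathbf 1_{\{u=m\}}$. Dominated convergence therefore gives
\[
\lim_{\varepsilon\to0^+}\frac1\varepsilon\int_{\{m\le u\le m+\varepsilon\}}|\nabla u|^2\,dV=\int_{\{u=m\}}\Delta u\,dV .
\]

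\textbf{Vanishing of $\Delta u$ on the minimum set.} This is the only step that needs care. Every point of $\{u=m\}$ is a minimum point, so $\nabla u=0$ on $\{u=m\}$, and thus $\{u=m\}\subset\{\nabla u=0\}$. Work in local coordinates and apply the same a.e. fact to each smooth component $\partial_i u$ on its level set $\{\partial_i u=0\}$. This gives $\nabla\partial_i u=0$ almost everywhere on $\{\nabla u=0\}$. Consequently the coordinate second derivatives of $u$ vanish a.e. on $\{u=m\}$, and the first-order terms of $\Delta u$ vanish there as well. Hence $\Delta u=0$ a.e. on $\{u=m\}$, so the limit above is $0$. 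This proves the first assertion, and the second follows as explained at the start.
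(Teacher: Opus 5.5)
Your proof is correct, and it reaches the conclusion by a somewhat different route than the paper.

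The paper uses Sard's theorem and the coarea formula. It applies the divergence theorem on each regular sublevel set to get $\int_{\{u=t\}}|\nabla u|\,d\sigma_t\le\int_{\{m\le u\le t\}}|\Delta u|\,dV$, then integrates in $t$. This yields only the inequality $\frac1\varepsilon\int_{\{m\le u\le m+\varepsilon\}}|\nabla u|^2\,dV\le\int_{\{m\le u\le m+\varepsilon\}}|\Delta u|\,dV$.

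You instead test $\Delta u$ against the Lipschitz truncation $(m+\varepsilon-u)_+$. This gives an exact identity without regular values or the coarea formula. In fact your identity is precisely the $t$-average over $[m,m+\varepsilon]$ of the paper's signed divergence-theorem identity, since $(m+\varepsilon-u)_+=\int_m^{m+\varepsilon}\mathbf{1}_{\{u\le t\}}\,dt$.

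The more substantive difference is in the last step. The paper concludes by saying that the layers shrink to the minimum set, so the right-hand side tends to $0$. But dominated convergence only gives the limit $\int_{\{u=m\}}|\Delta u|\,dV$. This vanishes automatically only when $\{u=m\}$ has measure zero, and for a general smooth $u$ the minimum set can have positive measure. Your final paragraph supplies exactly the justification this step needs: second derivatives vanish almost everywhere on $\{\nabla u=0\}\supset\{u=m\}$, hence $\Delta u=0$ a.e.\ there. So your argument is, if anything, more complete than the one in the paper.
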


\begin{proof} For completeness, we recall the
short proof.

For almost every $t\in(m,m+\varepsilon)$, $t$ is a regular value of $u$.
By the coarea formula,
\begin{equation*}
\int_{\{m\le u\le m+\varepsilon\}}|\nabla u|^2\,dV
=
\int_m^{m+\varepsilon}
\left(
\int_{\{u=t\}}|\nabla u|\,d\sigma_t
\right)dt.
\end{equation*}
For such $t$, applying the divergence theorem to the set
$\{m\le u\le t\}$ gives
\begin{equation*}
\int_{\{u=t\}}|\nabla u|\,d\sigma_t
\le
\int_{\{m\le u\le t\}}|\Delta u|\,dV.
\end{equation*}
Hence
\begin{equation*}
\frac1\varepsilon
\int_{\{m\le u\le m+\varepsilon\}}|\nabla u|^2\,dV
\le
\int_{\{m\le u\le m+\varepsilon\}}|\Delta u|\,dV.
\end{equation*}
Since $|\Delta u|\in L^1(M)$ and the sets
$\{m\le u\le m+\varepsilon\}$ shrink to the minimum set of $u$, the right hand side tends to $0$ as $\varepsilon\to0^+$.  The estimate near the maximum follows by applying the same argument to $-u$.
\end{proof}


\vspace{1cm}

\section{Proof of Theorem \ref{thm:main}}

We divide the proof into several subsections.

 \subsection{Umbilic points}
\begin{lemma}\label{lem:umbilic_implies_totally_geodesic}
Let $M$ be a minimal hypersurface in $\mathbb{S}^5$ with second fundamental $h$, shape operator $A$ and  $f_4={\rm tr} A^4$.  Assume that
$
    f_4=\frac{17}{36} \abs{A}^4 
$ holds at each point in $M$.  If
$A(p)=0$ at some point $p\in M$, then
$M$ is totally geodesic.
\end{lemma}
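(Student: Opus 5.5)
The plan is to combine real analyticity of minimal hypersurfaces with a blow-up at $p$. The blow-up turns the pointwise identity into an algebraic identity for a homogeneous harmonic polynomial, and Lemma \ref{lem:LGW} then yields a contradiction. I assume $M$ is connected, as in Theorem \ref{thm:main}. By Lemma \ref{lem:euler-balanced-algebra}, the hypothesis gives the pointwise identity
\begin{align*}
\det A = K=\frac{|A|^4}{144}\ \ge 0 \qquad\text{on } M .
\end{align*}
A minimal hypersurface of $\mathbb S^5$ is real analytic. So if $A$ vanishes to infinite order at $p$, then $A\equiv0$ near $p$, and by analyticity and connectedness $M$ is totally geodesic. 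It therefore suffices to rule out the case where $A$ vanishes to some finite order $k\ge 1$ at $p$.

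\textbf{Local graph and leading term.} Take geodesic normal coordinates $x\in\mathbb R^4$ of $\mathbb S^5$ centred at $p$, adapted so that $T_pM=\{x_5=0\}$. Near $p$, write $M$ as a graph $x_5=w(x)$ with $w(0)=0$ and $\nabla w(0)=0$. The minimal surface equation in $\mathbb S^5$ has the form
\begin{align*}
\Delta_0 w = Q(x,w,\nabla w,\nabla^2 w),
\end{align*}
where $\Delta_0$ is the Euclidean Laplacian. The right-hand side $Q$ collects two kinds of terms: terms coming from the metric, which involve $|x|^2\nabla^2 w$, $x\cdot\nabla w$ and $w$ with coefficients, and terms at least quadratic in $(\nabla w,\nabla^2 w)$.

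The components of $A$ equal $\nabla^2 w$ modulo terms of higher order at $0$. Hence if $A$ vanishes to order exactly $k$, then $w=u+O(|x|^{k+3})$, where $u$ is a nonzero homogeneous polynomial of degree $k+2\ge 3$. Comparing the terms of homogeneous degree $k$ in the equation shows that every term of $Q$ has degree strictly larger than $k$, so $\Delta_0 u=0$. Moreover $A=\nabla^2u+O(|x|^{k+1})$ in an orthonormal frame.

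\textbf{Algebraic identity and contradiction.} Substitute $x=ty$ into $\det A=|A|^4/144$, divide by $t^{4k}$ and let $t\to0$. This gives, for all $y\in\mathbb R^4$,
\begin{align*}
\det\nabla^2u(y)=\frac{|\nabla^2 u(y)|^4}{144}\ \ge0 .
\end{align*}
Thus the Hessian determinant of the harmonic function $u$ on $\mathbb R^4$ does not change sign, so Lemma \ref{lem:LGW} applies. If $\det\nabla^2u\equiv0$, the identity forces $\nabla^2u\equiv0$, so $u$ is affine; this contradicts $u$ being a nonzero homogeneous polynomial of degree $\ge3$. Otherwise $\det\nabla^2 u$ is nowhere zero; but $\nabla^2u$ is homogeneous of degree $k\ge1$ and hence vanishes at $y=0$, again a contradiction. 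Therefore $A$ vanishes to infinite order at $p$, and $M$ is totally geodesic.

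\textbf{Main obstacle.} The step needing the most care is the order bookkeeping in the local graph representation. One must check that the curvature corrections in the minimal surface equation of the sphere, and the difference between $A$ and $\nabla^2w$ in the orthonormal frame, are genuinely of higher homogeneous order than the leading Hessian term. Only then is the leading part $u$ harmonic, and only then does the limiting identity hold exactly. I expect this to follow from $w=O(|x|^{k+2})$ and $\nabla w=O(|x|^{k+1})$, but it is the point to verify carefully.
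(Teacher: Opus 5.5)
Your argument is correct. Its endgame is the same as the paper's: the leading-order identity $\det\nabla^2u=|\nabla^2u|^4/144$ for a nonzero harmonic homogeneous polynomial of degree at least $3$, then the Lewy--Gleason--Wolff lemma together with $\nabla^2u(0)=0$.

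You differ in how the harmonic polynomial is produced. The paper stays intrinsic on $M$. It gets finite-order vanishing of $A$ from Aronszajn's unique continuation for the Simons system $\Delta h=(4-|A|^2)h$. It then shows that the leading Taylor term $B$ of $h_{ij}$ in normal coordinates of $M$ is symmetric, trace-free and Codazzi. Hence $B=\nabla^2P$, with the explicit harmonic potential $P=\frac{1}{(m+1)(m+2)}\sum_{i,j}x_ix_jB_{ij}$.

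You replace unique continuation by real analyticity of minimal hypersurfaces. You take the polynomial to be the leading term $u$ of the graph function $w$, which is harmonic because the minimal surface equation reduces to $\Delta_0$ at leading order. This spares you the Codazzi/potential construction and Aronszajn's theorem. The cost is invoking analytic regularity and doing the error bookkeeping in the graph parametrization, which the paper avoids.

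In that bookkeeping, state explicitly the one place where the round metric enters. The coordinate hyperplane $\{x_5=0\}$ is the great sphere $\exp_p(T_pM)$, which is totally geodesic. Therefore $\Gamma^5_{ij}(x',0)=0$ for $i,j\le 4$, and $w\equiv0$ solves the equation. This is exactly what justifies your claim that $Q$ has no $w$-independent term. It also gives
$h_{ij}=w_{ij}+O(|w|+|x|\,|\nabla w|+(|x|^2+|\nabla w|^2)|\nabla^2w|)$.
From this, $w$ vanishes to order exactly $k+2$ and $A=\nabla^2u+O(|x|^{k+1})$. For a general ambient metric these quantities would pick up $w$-independent $O(|x|)$ terms. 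In the paper the same input appears as the absence of an ambient-curvature term in the Codazzi equation $h_{ijk}=h_{ikj}$.

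A small slip: normal coordinates on $\mathbb S^5$ are $(x,x_5)\in\mathbb R^5$, not $x\in\mathbb R^4$.
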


\begin{proof}
Suppose, for contradiction, that \(A(p)=0\) but \(A\not\equiv0\).
Choose geodesic normal coordinates
$
    x=(x_1,\ldots,x_4)
$
centered at \(p\).  Thus \(p\) corresponds to \(x=0\), and a point near \(p\) is
written as
$
    q=\exp_p(x).
$
Let
$
    \partial_i=\frac{\partial}{\partial x_i}.
$
We write the coordinate components of the second fundamental form as
\begin{align*}
   h_{ij}(x):=
 \jkh{ A_q\partial_i,\partial_j}_q, \quad 1\le i,j \le 4.
\end{align*}
These are smooth functions of $x$ near $0$.  Moreover, since $A(p)=0$, we have $h_{ij}(0)=0$ for all $i,j$.

We first observe that $A$ cannot vanish to infinite order at $p$.  Indeed, by Simons identity \eqref{Simonsid}, 
the second fundamental form $h$ satisfies the Simons tensor equation
\begin{equation*}
 g^{ab}\nabla_a\nabla_bh=\Delta^{\nabla}h=(4-|A|^2)h,
\end{equation*}
where $ \Delta^{\nabla}$ is the rough Laplacian on symmetric $2$-tensors. Consequently, in local coordinates the components $h_{ij}$ satisfy a  system of the form
\begin{align}\label{ellipticsys}
    g^{ab}\partial_a\partial_b h_{ij}
+P^{c,\alpha\beta}_{ij}(x)\partial_c h_{\alpha\beta}
+Q^{\alpha\beta}_{ij}(x)h_{\alpha\beta}=0,
\end{align}
where the coefficients $P^{c,\alpha\beta}_{ij}$ and
$Q^{\alpha\beta}_{ij}$ are smooth functions in the coordinate neighborhood. Indeed, they are obtained by expanding the covariant derivatives
\begin{align*}
\nabla_a h_{ij}= \partial_a h_{ij}-\Gamma_{ai}^r h_{rj}- \Gamma_{aj}^r h_{ir}.
\end{align*}
Since $g^{ab}$ is positive definite, \eqref{ellipticsys} is a linear elliptic system.  Therefore, by Aronszajn's unique continuation theorem \cite{Aro57}, if $A$ vanished to infinite order at $p$, then $A\equiv0$ on $M$, contrary to our assumption $A\not\equiv0$.

Hence $A$ has a finite order of vanishing at $p$.  Therefore there exists a smallest integer $m\ge 1$ such that
$
    \partial^\alpha h_{ij}(0)\neq0
$
for some pair $i,j$ and some multi-index $\alpha$ with $|\alpha|=m$.  Thus
\begin{align*}
 \partial^\beta h_{ij}(0)=0
    \qquad
    \text{for all }i,j\text{ and all }|\beta|<m.
\end{align*}
By Taylor's formula, for every $i,j$,
\begin{align*}
  h_{ij}(x)=B_{ij}(x)+O(|x|^{m+1}),
\end{align*}
where
\begin{align*}
 B_{ij}(x):=
    \sum_{|\alpha|=m}
    \frac{1}{\alpha!}
    \partial^\alpha h_{ij}(0)x^\alpha.
\end{align*}
Each $B_{ij}$ is either zero or a homogeneous polynomial of degree $m$, and by the choice of $m$, the tensor
$
    B=(B_{ij})
$
is not identically zero.  By the symmetry of $A$, we have  $h_{ij}(x)=h_{ji}(x)$. Therefore,
\begin{align*}
0=h_{ij}(x)-h_{ji}(x)=B_{ij}(x)-B_{ji}(x)+O(|x|^{m+1}).
\end{align*}
Since $B_{ij}$ is zero or a homogeneous polynomial of degree $m$, we then have $B_{ij}=B_{ji}$.

We next derive the equations satisfied by $B$.  In geodesic normal coordinates,
\begin{equation*}
 g_{ij}(x)=\delta_{ij}+O(|x|^2),
    \qquad
    g^{ij}(x)=\delta^{ij}+O(|x|^2),
\end{equation*}
and
$
    \Gamma_{ij}^k(x)=O(|x|).
$
Since $M$ is minimal,
$
    0=\operatorname{tr}_g A=g^{ij}h_{ij}.
$
Substituting
\begin{equation*}
    g^{ij}=\delta^{ij}+O(|x|^2),
    \qquad
    h_{ij}=B_{ij}+O(|x|^{m+1}),
\end{equation*}
we get
\begin{equation*}
    0 = \delta^{ij}B_{ij} + O(|x|^{m+1}).
\end{equation*}
The term $\delta^{ij}B_{ij}$ is homogeneous of degree $m$.  Therefore it must
vanish identically.  Hence
\begin{equation}
\sum_i B_{ii}=0.    \label{Bii}
\end{equation} 

Now consider the Codazzi equation
$
    \nabla_kh_{ij}=\nabla_jh_{ik}.
$
In coordinates,
\begin{equation*}
    \nabla_kh_{ij}=\partial_kh_{ij}-\Gamma_{ki}^{\ell}h_{\ell j} - \Gamma_{kj}^{\ell}h_{i\ell}.
\end{equation*}
Since
\begin{equation*}
    \partial_kh_{ij}=\partial_kB_{ij}+O(|x|^m), \quad \Gamma_{ki}^{\ell}h_{\ell j}=O(|x|)\,O(|x|^m)= O(|x|^{m+1}),
\end{equation*}
we obtain
\begin{equation*}
\nabla_kh_{ij} =\partial_kB_{ij}+O(|x|^m).
\end{equation*}
Similarly,
\begin{equation*}
    \nabla_jh_{ik}=\partial_jB_{ik} +O(|x|^m).
\end{equation*}
The Codazzi equation therefore gives
\begin{equation*}
0=\nabla_k h_{ij}-\nabla_j h_{ik}=  \partial_kB_{ij}-\partial_jB_{ik}
+ O(|x|^m).
\end{equation*}
But $\partial_kB_{ij}-\partial_jB_{ik}$ is a homogeneous polynomial of degree $m-1$.  Hence it
must vanish identically, and so
\begin{equation}
 \partial_kB_{ij}=\partial_jB_{ik}.   \label{CodazziB}
\end{equation}
Thus \(B\) is a symmetric trace-free Codazzi \(2\)-tensor on the Euclidean vector
space \(T_pM\simeq\mathbb R^4\).

We now show that $B$ is the Hessian of a harmonic homogeneous polynomial. Actually, define
\begin{equation*}
P(x)=\frac{1}{(m+1)(m+2)}\sum_{i,j} x_i x_j B_{ij}(x). 
\end{equation*}
Since $B_{ij}$ is homogeneous of degree $m$, direct computations by using \eqref{CodazziB} and Euler's identity for homogeneous polynomials give
\begin{equation*}
\partial_i\partial_j P=B_{ij}.
\end{equation*} 
Moreover, by \eqref{Bii},
\begin{equation*}
 \Delta P=\sum_i\partial_i\partial_iP=\sum_iB_{ii}=0.
\end{equation*}
Thus $P$ is a nonzero harmonic homogeneous polynomial of degree $m+2\ge3$ on $\mathbb R^4$, and
$
    B=\nabla^2P.
$

We now use the pointwise algebraic condition.  Observe that  the condition
$
    f_4=\frac{17}{36}|A|^4
$
is equivalent to
   $ \operatorname{tr}A^4= \frac{17}{36}(\operatorname{tr}A^2)^2.
$
In the above normal coordinates, the matrix of the shape operator is
$
    A^i_{\ j}=g^{ik}h_{kj}.
$
We then have
\begin{equation*}
 A^i_{\ j}=(\delta^{ik}+O(|x|^2))(B_{kj}+O(|x|^{m+1})=B_{ij}+O(|x|^{m+1}).
\end{equation*}
Taking the homogeneous terms of degree $4m$ in
$\operatorname{tr}A^4= \frac{17}{36}(\operatorname{tr}A^2)^2
$
therefore gives
\begin{equation}
 \operatorname{tr}B^4=\frac{17}{36}(\operatorname{tr}B^2)^2.       \label{trB4}
\end{equation}
For a trace-free \(4\times4\) symmetric matrix \(B\), Newton's identity gives
\begin{equation*}
    \operatorname{tr}B^4
    -
    \frac12(\operatorname{tr}B^2)^2
    +
    4\det B
    =
    0.
\end{equation*}
Combining this identity with \eqref{trB4}, we obtain
\begin{equation}
   \det B= \frac{1}{144}(\operatorname{tr}B^2)^2.  \label{detB}
\end{equation}
Since $B=\nabla^2P$, equation \eqref{detB} becomes
\begin{equation}
\det\nabla^2P=\frac{1}{144}|\nabla^2P|^4\ge0.     \label{dethessianP}   
\end{equation}
Furthermore, $\det\nabla^2P$ is not identically zero.  Indeed, if
$
    \det\nabla^2P\equiv0,
$
then \eqref{dethessianP} gives
$
    |\nabla^2P|^4\equiv0,
$
and hence
$
    B=\nabla^2P\equiv0,
$
contradicting the construction of $B$.

On the other hand, since $P$ is homogeneous of degree $m+2\ge3$, its Hessian
$\nabla^2P$ is homogeneous of positive degree $m$.  Hence
$
    \det\nabla^2P(0)=0.
$
By Lemma \ref{lem:LGW}, we have $\det\nabla^2 P\equiv 0$.

Therefore,   we get a contradiction. This contradiction shows that our assumption $A\not\equiv0$ is impossible.
Thus, we have   $ A\equiv0$ on $M$ and $M$ is totally geodesic.
\end{proof}


\vspace{0.5cm}
\subsection{The non-umbilic case}

\begin{lemma}\label{lem:no_umbilic_chi_zero}
Assume that $M$ is a closed  oriented minimal hypersurface in $\mathbb S^5$  satisfying
$
    f_4=\frac{17}{36}|A|^4.
$
If $M$ has no umbilic point, then
$
    \chi(M)=0.
$
\end{lemma}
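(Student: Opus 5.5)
The approach is topological. I will first normalize the principal curvatures so that Lemma \ref{lem:S12_f4_68_algebra} applies, then split $TM$ into its negative and positive principal distributions, and finally show that $\chi(M)$ vanishes by an Euler class computation with the trace-free parts of $A$ on these two planes. Nowhere do I plan to use the layer estimate.

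\emph{Step 1: normalization.} Since $M$ has no umbilic point and $\operatorname{tr}A=0$, we have $|A|^2>0$ everywhere. By Lemma \ref{lem:euler-balanced-algebra}, $K=|A|^4/144>0$, so at every point exactly two principal curvatures are negative and two are positive. Set $\mu_i=\sqrt{12}\,\lambda_i/|A|$. Then $\sum\mu_i=0$, $\sum\mu_i^2=12$ and $\prod\mu_i=144K/|A|^4=1$, so Lemma \ref{lem:S12_f4_68_algebra} applies pointwise to the smooth function $u=\sum\mu_i^3$. Order $\lambda_1\le\lambda_2<0<\lambda_3\le\lambda_4$. That lemma then gives $\lambda_3=\lambda_4$ only where $u=-8\sqrt{2\sqrt3}$, and $\lambda_1=\lambda_2$ only where $u=8\sqrt{2\sqrt3}$. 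These two sets are disjoint, so at no point are both pairs equal.

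\emph{Step 2: the splitting.} Zero is never a principal curvature, because $K>0$. The negative eigenspace $E_-$ and the positive eigenspace $E_+$ of $A$ are therefore smooth rank-$2$ distributions; they are the images of spectral projections given by contour integrals of the resolvent around the negative and positive half-lines. This yields a global orthogonal splitting $TM=E_-\oplus E_+$.

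If $E_\pm$ is non-orientable, I pass to a finite cover $\tilde M\to M$ of degree $d$ on which both pulled-back distributions are orientable. Since $\chi(\tilde M)=d\,\chi(M)$, it suffices to treat the case where $E_\pm$ are oriented. Orient them compatibly with $TM$, so that $e(TM)=e(E_-)\smile e(E_+)$.

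\emph{Step 3: the trace-free parts.} Consider the sections
\[
S_\pm=A|_{E_\pm}-\tfrac12\operatorname{tr}(A|_{E_\pm})\,\mathrm{id}_{E_\pm}
\]
of the bundles $Q_\pm$ of trace-free symmetric endomorphisms of $E_\pm$. Here $S_+$ vanishes exactly where $\lambda_3=\lambda_4$, and $S_-$ vanishes exactly where $\lambda_1=\lambda_2$. By Step 1, the section $(S_-,S_+)$ of $Q_-\oplus Q_+$ is nowhere zero.

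For an oriented rank-$2$ bundle $E$, regarded as a complex line bundle $L$, the bundle of trace-free symmetric endomorphisms is isomorphic to $L^{\otimes 2}$. Hence $e(Q_\pm)=2e(E_\pm)$. A nowhere-vanishing section of the oriented rank-$4$ bundle $Q_-\oplus Q_+$ forces
\[
0=e(Q_-)\smile e(Q_+)=4\,e(E_-)\smile e(E_+)=4\,e(TM).
\]
Evaluating on the fundamental class gives $4\chi(M)=0$, so $\chi(M)=0$.

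\emph{Main obstacle.} The delicate points are the orientation bookkeeping in Steps 2 and 3. One must check that the identification $Q\cong L^{\otimes2}$ is orientation-compatible, and that the finite cover can be chosen so that both $E_\pm$ become orientable at once. The latter should follow because the obstructions are the classes $w_1(E_\pm)\in H^1(M;\mathbb Z_2)$, and the cover associated with the kernel of $(w_1(E_-),w_1(E_+))\colon\pi_1(M)\to\mathbb Z_2^2$ kills both. Smoothness of $E_\pm$ also needs a remark, since the individual eigenvalues are not smooth where double roots occur; the spectral-projection description handles this. If the Euler-class argument runs into trouble, a fallback is to work on the open sets $\{u>-8\sqrt{2\sqrt3}\}$ and $\{u<8\sqrt{2\sqrt3}\}$. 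The simple eigenlines of $\lambda_4$ and $\lambda_1$ live there, and one would glue them into a global line field, which again forces $\chi(M)=0$.
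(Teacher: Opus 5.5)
Your argument is correct, but it closes the topological step by a different mechanism from the paper.

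The two proofs share three ingredients: the splitting $TM=E_-\oplus E_+$, the pointwise fact that $\lambda_1=\lambda_2$ and $\lambda_3=\lambda_4$ never hold at the same point, and the passage to a finite cover on which $E_\pm$ are orientable. The paper reads the pointwise fact directly off Lemma~\ref{lem:euler-balanced-algebra}, which excludes the $2+2$ type. Your rescaling to invoke Lemma~\ref{lem:S12_f4_68_algebra} is valid but a detour.

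The paper then uses the open cover $M=U_-\cup U_+$. On $U_-$ the bundle $E_-$ splits into two eigenline bundles, so $e(E_-)|_{U_-}$ is torsion and vanishes over $\mathbb R$; likewise for $E_+$ on $U_+$. A relative cup product then lands in $H^4(M,U_-\cup U_+;\mathbb R)=0$.

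You instead assemble the two local facts into one global object: the section $(S_-,S_+)$ of $Q_-\oplus Q_+$ is nowhere zero. Trace-free symmetric endomorphisms of an oriented plane are exactly the $J$-antilinear ones, with $J$ the rotation by a right angle. Hence $Q_\pm\cong E_\pm^{\otimes 2}$ and $e(Q_\pm)=\pm 2e(E_\pm)$, which gives $4e(TM)=0$ with integer coefficients. The orientation sign you list as an obstacle is irrelevant, since only vanishing is used.

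The two arguments encode the same idea: a nonzero trace-free symmetric endomorphism of a plane amounts to an orthogonal eigenline splitting plus a positive scale. Your packaging avoids relative cohomology and the possible non-orientability of the eigenline bundles. The paper's version needs no representation-theoretic identification.

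Your fallback of gluing the $\lambda_1$- and $\lambda_4$-eigenlines into a global line field is unnecessary. As stated it would also need extra care: interpolating between a line in $E_-$ and a line in $E_+$ over $U_-\cap U_+$ requires a choice of relative sign, and that choice need not exist globally.
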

\begin{proof}
Since $M$ has no umibilical point, we have $|A|^2>0$ 
everywhere on $M$. By Lemma \ref{lem:newton-identities}, we have 
$
    K=\frac{|A|^4}{144}>0.
$
Thus every principal curvature is nonzero. Since $M$ is minimal,
$
    \sum_{i=1}^4\lambda_i=0,
$
so at every point there are exactly two negative and two positive principal curvatures.

Let
$
    E_- \subset TM
$
be the rank $2$ distribution spanned by the negative principal directions, and let
$
    E_+ \subset TM
$
be the rank $2$ distribution spanned by the positive principal directions. The spectral gap at $0$ implies that $E_+$ and  $E_-$ are both smooth  subbundles. Moreover,
\begin{equation}
TM=E_-\oplus E_+. \label{TMdecom}
\end{equation}
Define two subsets 
\begin{equation*}
 U_-:=\{p\in M:\lambda_1(p)<\lambda_2(p)<0\},\quad U_+:=\{p\in M:0<\lambda_3(p)<\lambda_4(p)\}.
\end{equation*}
Lemma \ref{lem:euler-balanced-algebra} implies
\begin{equation}
U_-\cup U_+=M. \label{Upm}
\end{equation}

We first assume that $E_-$ and $E_+$ are all orientable.   On $U_-$, the bundle $E_-$ splits as a direct sum of two real line bundles. Hence its Euler class vanishes in $H^2(U_-;\mathbb R)$. Indeed, after passing to a double cover on which one line summand is orientable, the pull-back of $E_-$ has a nowhere-zero section; hence its Euler class is torsion and therefore vanishes over $\mathbb R$.
Hence the Euler class of $E_-$ restricts trivially to $U_-$, i.e.,
$
    e(E_-)|_{U_-}=0.
$
Similarly, $
    e(E_+)|_{U_+}=0.
$
By relative cohomology, $e(E_-)$ comes from $H^2(M,U_-;\mathbb R)$, while $e(E_+)$ comes from $H^2(M,U_+;\mathbb R)$. Therefore
$
    e(E_-)\smile e(E_+)
$
comes from
\begin{equation*}
H^4(M,U_-\cup U_+;\mathbb R)=H^4(M,M;\mathbb R)=0
\end{equation*}
by \eqref{Upm}. Hence
$
    e(E_-)\smile e(E_+)=0.
$
Using the Whitney product formula for the splitting \eqref{TMdecom}, we get
$
    e(TM)=e(E_-)\smile e(E_+)=0.
$
Therefore
$
    \chi(M)=0.
$


It remains to treat the case where $E_-$ and $E_+$ are not necessarily orientable.   Since $M$ is orientable and $TM=E_-\oplus E_+$, the Whitney formula gives
\begin{equation*}
   0=w_1(TM)=w_1(E_-)+w_1(E_+)
    \quad\text{in }H^1(M;\mathbb Z_2).
\end{equation*}
Since the coefficients are $\mathbb Z_2$, this is equivalent to
$
    w_1(E_-)=w_1(E_+).
$
Thus $E_-$ and $E_+$ have the same orientation double cover.

If both $E_-$ and $E_+$ are orientable, the preceding argument applies directly.  Otherwise, let
$
    \pi:\widetilde M\to M
$
be the two-fold covering associated with the nonzero class
\begin{equation*}
w_1(E_-)=w_1(E_+)\in H^1(M;\mathbb Z_2).
\end{equation*}
Then
$ \pi^*E_-$ and  $ \pi^*E_+$ are orientable rank two bundles over $\widetilde M$.  Moreover,
\begin{equation*}
    T\widetilde M=\pi^*TM=\pi^*E_-\oplus \pi^*E_+ .
\end{equation*}
Let
$\widetilde U_\pm:=\pi^{-1}(U_\pm).$
Since
$
    U_-\cup U_+=M,
$
we have
$
    \widetilde U_-\cup \widetilde U_+=\widetilde M.
$
On $\widetilde U_-$, the bundle $\pi^*E_-$ splits into two principal line subbundles, and hence
\begin{equation*}
   e(\pi^*E_-)|_{\widetilde U_-}=0
    \quad\text{in }H^2(\widetilde U_-;\mathbb R).
\end{equation*}
Similarly,
\begin{equation*}
    e(\pi^*E_+)|_{\widetilde U_+}=0
    \quad\text{in }H^2(\widetilde U_+;\mathbb R).
\end{equation*}
Thus the same relative cohomology argument as above gives
\begin{equation*}
   e(\pi^*E_-)\smile e(\pi^*E_+)=0
    \quad\text{in }H^4(\widetilde M;\mathbb R).
\end{equation*}
By the Whitney product formula,
\begin{equation*}
 e(T\widetilde M)
    =
    e(\pi^*E_-)\smile e(\pi^*E_+).
\end{equation*}
Therefore
$
    e(T\widetilde M)=0,
$
and hence
$
    \chi(\widetilde M)=0.
$
Since $\pi:\widetilde M\to M$ is a two-fold covering, the Euler characteristic is multiplicative under finite coverings:
$
    \chi(\widetilde M)=2\chi(M).
$
Consequently,
$
    \chi(M)=0.
$
This proves the lemma.
\end{proof}


\vspace{0.5cm}
\subsection{The equality case $|A|^2\equiv 12,\ f_4\equiv 68$}
\begin{prop}\label{prop:S12_f4_68}
Let \(M^4\subset \mathbb S^5\) be a closed connected oriented minimal hypersurface
satisfying
$
    |A|^2\equiv 12,
    \ 
    f_4\equiv 68.
$
Then $M$ is congruent to the Cartan minimal hypersurface.
\end{prop}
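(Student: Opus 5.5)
The plan is to show first that $f_3$ is constant, hence equal to $0$. Then the principal curvatures are the roots of a fixed quartic, so $M$ is isoparametric with four distinct principal curvatures, and the classification of such hypersurfaces finishes the proof.

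\textbf{Setup.} Since $|A|^2\equiv12$, Lemma \ref{lem:newton-identities} together with $f_4\equiv68$ gives $K\equiv1$. Simons' identity \eqref{Simonsid} then forces
\begin{align*}
|\nabla A|^2=(|A|^2-4)|A|^2=96 .
\end{align*}
Work at a point in a frame diagonalizing $A$. Differentiating $f_1=0$, $f_2=12$ and $f_4=68$ gives, for each $k$,
\begin{align*}
\sum_i h_{iik}=0,\qquad \sum_i\lambda_i h_{iik}=0,\qquad \sum_i\lambda_i^3h_{iik}=0 .
\end{align*}
Also $\tfrac13 e_k(f_3)=\sum_i\lambda_i^2h_{iik}$.

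By Lemma \ref{lem:S12_f4_68_algebra}, $|f_3|\le 8\sqrt{2\sqrt3}$. On the open set where $|f_3|$ is strictly smaller, the $\lambda_i$ are distinct and smooth. There the Vandermonde-type system above expresses $h_{iik}$ as a multiple of $e_k(f_3)$.

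\textbf{Showing $f_3$ is constant.} I will use the standard Peng--Terng type formula
\begin{align*}
\tfrac13\Delta f_3=(4-|A|^2)f_3+2\sum_{i,j,k}\lambda_i h_{ijk}^2 ,
\end{align*}
together with the analogous second-order identity obtained from $\Delta f_4=0$. Since $f_4$ is constant, the latter reads
\begin{align*}
0=\sum_{i,j,k}(\lambda_i^2+\lambda_i\lambda_j+\lambda_j^2)h_{ijk}^2+(4-|A|^2)f_4-|A|^2f_4+f_3^2 ,
\end{align*}
up to the exact constants. Integrating these identities over $M$, and using $|\nabla A|^2=96$ together with the pointwise restrictions on $h_{iik}$ above, the aim is an integral identity of the form $\int_M \Phi\, |\nabla f_3|^2\,dV=c\int_M f_3^2\,dV$, where $\Phi$ and $c$ have definite signs. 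From this I expect to conclude $\nabla f_3\equiv0$ and $f_3\equiv0$.

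The delicate points are the critical set of the eigenvalues, namely where $f_3=\pm8\sqrt{2\sqrt3}$ and a double root appears, and the division by Vandermonde determinants. To handle these I will apply Lemma \ref{lem:layer_estimate} to $u=f_3$. The double-root set is exactly where $f_3$ attains the extreme values $\pm8\sqrt{2\sqrt3}$, and the layer estimate shows that the boundary contributions from thin layers near $\max f_3$ and $\min f_3$ vanish. The integrations by parts on $\{m+\varepsilon\le f_3\le M_0-\varepsilon\}$ therefore pass to the limit.

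\textbf{Conclusion and main obstacle.} Once $f_3\equiv0$, the characteristic polynomial is $x^4-6x^2+1$. Its roots are the constants $\pm(1+\sqrt2)$ and $\pm(\sqrt2-1)$, so $M$ is isoparametric with $g=4$ and all multiplicities equal to $1$. By the classification of isoparametric hypersurfaces with $g=4$ and $m_1=m_2=1$ (Takagi; equivalently, homogeneity in this case), $M$ is congruent to the Cartan minimal hypersurface. The main obstacle is the middle step: finding the correct combination of the $\Delta f_3$ and $\Delta f_4$ identities, with $|\nabla A|^2=96$ built in, that yields a sign-definite integral, and controlling it near the double-root locus. If a direct sign argument fails, I would instead evaluate $\Delta f_3$ at the maximum and minimum of $f_3$. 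There $\lambda_1=\lambda_2$ (respectively $\lambda_3=\lambda_4$), and I would combine this with the layer lemma to force $\max f_3=\min f_3$.
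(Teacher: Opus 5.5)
Your preliminary reductions are correct and agree with the paper: $K\equiv1$, $|\nabla A|^2=96$, $h_{iik}$ as a multiple of $f_{3,k}$ on the set $Y$ where the principal curvatures are distinct, and the isoparametric classification once $f_3$ is constant. But the decisive step, $\nabla f_3\equiv0$, is only announced, and the tools you name cannot supply it.

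On $Y$, once the $h_{iik}$ are written through $\nabla f_3$, the only remaining unknowns in $\nabla A$ are the four numbers $x_l=h_{abc}^2$ with $\{a,b,c\}=\{1,2,3,4\}\setminus\{l\}$. Modulo terms quadratic in $\nabla f_3$:
\begin{itemize}
\item Simons' identity gives $\sum_l x_l=16$.
\item $\Delta f_4=0$ gives $\sum_l\lambda_l^2x_l=16$. For $H=0$ the correct formula is $\frac14\Delta f_4=(4-|A|^2)f_4+\sum_{i,j,k}(\lambda_i^2+\lambda_i\lambda_j+\lambda_j^2)h_{ijk}^2$; there is no $f_3^2$ term.
\item $\frac13\Delta f_3=-8f_3-4\sum_l\lambda_lx_l+(\text{terms quadratic in }\nabla f_3)$.
\end{itemize}
Since $1,\lambda_l,\lambda_l^2$ are linearly independent for distinct $\lambda_l$, the quantity $\sum_l\lambda_lx_l$ is not determined by the first two constraints. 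Already at the Cartan values it ranges over an interval symmetric about $0$. So every integrated combination of $\Delta\phi(f_3)$, $\Delta f_4$ and Simons' identity still contains $\int_M\phi'(f_3)\sum_l\lambda_lx_l\,dV$ with no sign. No identity $\int_M\Phi|\nabla f_3|^2=c\int_M f_3^2$ with signed $\Phi,c$ follows from these ingredients. Your fallback fails for the same reason. At an extremum of $f_3$, the sign of $\Delta f_3$ only constrains $\sum_l\lambda_lx_l$. Moreover, that extremum need not equal $\pm8\sqrt{2\sqrt3}$, so there need not be a double root there.

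The missing idea is a non-trace divergence identity. On $Y$ the paper takes the $3$-form $\Psi=\sum_{i<j}\phi_{ij}\,*(\omega_i\wedge\omega_j)\wedge\omega_{ij}$ with $\phi_{ij}=-(\lambda_i^2+\lambda_i\lambda_j+\lambda_j^2)$. It computes $d\Psi$ from the structure equations and the Gauss equation $R_{ijij}=1+\lambda_i\lambda_j$:
\begin{itemize}
\item The $h_{ijk}^2$ terms with distinct indices cancel, because $(\lambda_i-\lambda_j)\phi_{ij}=-(\lambda_i^3-\lambda_j^3)$ telescopes.
\item The curvature term $-\sum_{i<j}(1+\lambda_i\lambda_j)\phi_{ij}$ vanishes precisely because $|A|^2=12$ and $f_4=68$.
\item What is left is $d\Psi=\sum_rG_rf_{3,r}^2\,dV$ with explicit coefficients $G_r>0$ (Lemma \ref{lem:dPsi_detailed}).
\end{itemize}

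In addition, near the double-root locus the layer lemma by itself is not enough. The coefficients $U_r$ in $df_3\wedge\Psi=\sum_rU_rf_{3,r}^2\,dV$ blow up there. One needs the one-sided bounds of Lemma \ref{lem:endpoint_U}, which say the divergent terms have the favourable sign, matched with the monotonicity of the cutoff $\eta_\varepsilon\circ f_3$ at each end. Without an identity of this kind and this sign control, your argument does not show that $f_3$ is constant.
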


\begin{proof}
By Newton's identities,
$
    K=\lambda_1\lambda_2\lambda_3\lambda_4=1.
$
Then the principal curvatures are the roots of
\begin{equation}
x^4-6x^2-\frac{f_3}{3}x+1=0. \label{characterpoly}
\end{equation} 
Define
\begin{equation*}
   Y:=\dkh{p\in M:\ |f_3(p)|<8\sqrt{2\sqrt{3}} }.
\end{equation*}
By Lemma~\ref{lem:S12_f4_68_algebra}, on $Y$ the four principal curvatures are distinct.  Hence they may be ordered smoothly as
$
    \lambda_1<\lambda_2<\lambda_3<\lambda_4.
$
Let
\begin{equation*}
    D_i:=\prod_{j\ne i}(\lambda_i-\lambda_j),
    \qquad
    \Delta:=\prod_{1\le i<j\le4}(\lambda_i-\lambda_j).
\end{equation*}
We shall use a local ordered principal orthonormal frame
$
    e_1,\cdots,e_4,
   \ 
    h_{ij}=\lambda_i\delta_{ij}.
$
In this frame,
$
    h_{iik}=\lambda_{i,k}.
$
Since
\begin{equation*}
  \lambda_i^4-6\lambda_i^2-\frac{f_3}{3}\lambda_i+1=0,
\end{equation*}
differentiating with respect to \(e_k\) gives
\begin{equation*}
  D_i\lambda_{i,k}-\frac{\lambda_i}{3}f_{3,k}=0.
\end{equation*}
Therefore
\begin{equation}
   h_{iik}= \lambda_{i, k}=\frac{\lambda_i}{3D_i}f_{3, k}.
    \label{eq:h_iik_f_k}
\end{equation} 

For \(i<j\), define the \(3\)-forms
\begin{align*}
    \theta_{12}&:=\omega_3\wedge\omega_4\wedge\omega_{12},&
    \theta_{13}&:=\omega_4\wedge\omega_2\wedge\omega_{13},\\
    \theta_{14}&:=\omega_2\wedge\omega_3\wedge\omega_{14},&
    \theta_{23}&:=\omega_1\wedge\omega_4\wedge\omega_{23},\\
    \theta_{24}&:=\omega_3\wedge\omega_1\wedge\omega_{24},&
    \theta_{34}&:=\omega_1\wedge\omega_2\wedge\omega_{34}.
\end{align*}
Equivalently,
$\theta_{ij}=*(\omega_i\wedge\omega_j)\wedge\omega_{ij},
$ where $*$ is the Hodge star with respect to the oriented coframe $\omega_1,\cdots,\omega_4$.
Now define
\begin{equation*}
    \Psi
    :=
    -\sum_{i<j}
    \left(
        \lambda_i^2+\lambda_i\lambda_j+\lambda_j^2
    \right)\theta_{ij}.
\end{equation*}
The expression is independent of the local choice of ordered principal frame. Indeed, after the ordering of the simple principal curvatures is fixed, the only remaining ambiguity is the sign change
$
    e_i\mapsto s_i e_i,
    \ 
    (s_i=\pm1).
$
On an oriented overlap one has $s_1s_2s_3s_4=1$.  Under this change,
\begin{equation*}
   \omega_k\mapsto s_k\omega_k,
    \qquad
    \omega_{ij}\mapsto s_is_j\omega_{ij},
\end{equation*} 
and hence each $\theta_{ij}$ is unchanged.  Thus $\Psi$ is a globally defined $3$-form on $Y$.

We next compute $d\Psi$.

\begin{lemma}\label{lem:dPsi_detailed}
On $Y$, one has
\begin{equation}
  d\Psi
    =
    \sum_{r=1}^4
    G_r(\lambda) f_{3,r}^2\,dV,
    \label{eq:dPsi_positive}
\end{equation} 
where
\begin{equation}
G_r(\lambda)
    =
    \frac{2P(\lambda_r^2)}
         {9\lambda_r^2\Delta^2},
    \label{eq:G_r}
\end{equation}
and
\begin{equation}
    P(y)=3y^4-40y^3+162y^2-120y+27.
    \label{3.4} 
\end{equation} 
Moreover,
\begin{equation*}
    G_r(\lambda)>0
    \qquad
    \text{on }Y.
\end{equation*}
\end{lemma}

\begin{proof}
  At a point of $Y$, use an ordered principal orthonormal frame.  For $i\ne j$,  we have $h_{ij}\equiv0$, and hence
$dh_{ij}=0$.  Therefore, from \eqref{hijkdef}, we have
\begin{equation*}
\sum_k h_{ijk}\omega_k =(\lambda_i-\lambda_j)\omega_{ij}.
\end{equation*}
Hence
\begin{equation}
 \omega_{ij}
    =
    \sum_k
    \frac{h_{ijk}}{\lambda_i-\lambda_j}\omega_k.
    \label{eq:omega_ij_hijk}
\end{equation} 
Let first
$
    \Phi=\sum_{i<j}\phi_{ij}\theta_{ij}
$
be an arbitrary $3$-form of this type, where $\phi_{ij}=\phi_{ji}$ are smooth symmetric functions of the principal curvatures.  
Therefore,
\begin{equation*}
d\Phi=\sum_{i<j}d(\phi_{ij}\theta_{ij})=\sum_{i<j}
    \kh{d\phi_{ij}\wedge\theta_{ij}+\phi_{ij}d\theta_{ij}}.
\end{equation*}
 By using \eqref{eq:omega_ij_hijk}, we obtain,
\begin{align*}
d\phi_{12}\wedge \theta_{12}=&\phi_{12,k}\omega_k\wedge \omega_3\wedge \omega_4\wedge \frac{h_{12l}}{\lambda_1-\lambda_2} \omega_l\\
=&\frac{\phi_{12,1}h_{122}-\phi_{12,2}h_{121}}{\lambda_1-\lambda_2} \omega_1\wedge \omega_2\wedge \omega_3\wedge \omega_4\\
=&\frac{\phi_{12,1}h_{122}-\phi_{12,2}h_{121}}{\lambda_1-\lambda_2} dV.
\end{align*} 
Here $dV=\omega_1\wedge \omega_2\wedge \omega_3 \wedge \omega_4$ is the volume form.
We now calculate the other term
\begin{equation*}
\phi_{12}d\theta_{12} 
 =\phi_{12}\kh{d\omega_3\wedge \omega_4\wedge \omega_{12}-\omega_3\wedge d\omega_4\wedge \omega_{12}+\omega_3\wedge \omega_4 \wedge d\omega_{12}}
\end{equation*}
For simplicity, we write $c_{ij}^k=\dfrac{h_{ijk}}{\lambda_i-\lambda_j}$. By using \eqref{domegai}, \eqref{domegaij}, \eqref{Gausseq} and \eqref{eq:omega_ij_hijk}, a direct computation gives
\begin{align*}
d\omega_3\wedge \omega_4\wedge \omega_{12}=&\omega_{3j}\wedge\omega_j\wedge \omega_4\wedge\omega_{12}=c_{3j}^kc_{12}^l\omega_k\wedge \omega_j\wedge\omega_4\wedge \omega_l\\
=&\kh{c_{31}^2c_{12}^3-c_{31}^3c_{12}^2-c_{32}^1c_{12}^3
+c_{32}^3c_{12}^1 }dV,\\
-\omega_3\wedge d\omega_4\wedge \omega_{12}=&-\omega_3\wedge \omega_{4j}\wedge\omega_j\wedge \omega_{12}=-c_{4j}^kc_{12}^l \omega_3\wedge \omega_k\wedge \omega_j \wedge \omega_l\\
=&\kh{c_{41}^2c_{12}^4- c_{41}^4c_{12}^2-c_{42}^1c_{12}^4+c_{42}^4c_{12}^1 }dV,\\
\omega_3\wedge \omega_4 \wedge d\omega_{12}=&\omega_3\wedge \omega_4 \wedge \kh{\omega_{1j}\wedge\omega_{j2}-\frac12 R_{12ab}\omega_a\wedge \omega_b}\\
=&\omega_3\wedge \omega_4\wedge\kh{c_{1j}^kc_{j2}^l\omega_k\wedge \omega_l -R_{1212}\omega_1\wedge \omega_2}\\
=&\kh{c_{13}^1c_{32}^2-c_{13}^2c_{32}^1+c_{14}^1c_{42}^2-c_{14}^2c_{42}^1 -(1+\lambda_1\lambda_2)}dV.
\end{align*}
In summary, we have
\begin{align*}
d\theta_{12}=&\left[ \frac{2h_{123}^2}{(\lambda_3-\lambda_1)(\lambda_3-\lambda_2)} +\frac{2h_{124}^2}{(\lambda_4-\lambda_1)(\lambda_4-\lambda_2)}+ \frac{h_{122}h_{133}}{(\lambda_1-\lambda_2)(\lambda_1-\lambda_3)}\right.\\
&\ \ \ +\frac{h_{211}h_{233}}{(\lambda_2-\lambda_1)(\lambda_2-\lambda_3)}+\frac{h_{122}h_{144}}{(\lambda_1-\lambda_2)(\lambda_1-\lambda_4)}+\frac{h_{211}h_{244}}{(\lambda_2-\lambda_1)(\lambda_2-\lambda_4)}\\
&\ \ \ \left. -\frac{h_{311}h_{322}}{(\lambda_3-\lambda_1)(\lambda_3-\lambda_2)}-\frac{h_{411}h_{422}}{(\lambda_4-\lambda_1)(\lambda_4-\lambda_2)}-(1+\lambda_1\lambda_2)\right] dV
\end{align*} 
The other terms of $d\phi_{ij}\wedge\theta_{ij}$ and $\phi_{ij}d\theta_{ij}$ can be calculated directly in the same way. After these calculations, $d\Phi$ can be written as
\begin{equation}
   d\Phi
    =
    \left(
        \mathcal R_\Phi
        +
        \mathcal T_\Phi
        +
        \mathcal D_\Phi
    \right)dV.  \label{eq:general_dPhi}
\end{equation} 
Here $\mathcal R_\Phi$ is the curvature contribution,
\begin{equation}
   \mathcal R_\Phi
    =
    -\sum_{i<j}(1+\lambda_i\lambda_j)\phi_{ij}.
     \label{eq:curvature_contribution}
\end{equation} 
The term $\mathcal T_\Phi$ is the contribution of the components $h_{ijk}$ with three distinct indices.  For every three distinct indices $i,j,k$, the coefficient of $h_{ijk}^2$ is 
\begin{equation}
 \frac{2\left[
    (\lambda_i-\lambda_j)\phi_{ij}
    -
    (\lambda_i-\lambda_k)\phi_{ik}
    +
    (\lambda_j-\lambda_k)\phi_{jk}
    \right]}
    {(\lambda_i-\lambda_j)(\lambda_i-\lambda_k)(\lambda_j-\lambda_k)}.
     \label{eq:three_distinct_coeff}
\end{equation} 
Finally, $\mathcal D_\Phi$ contains only terms of the form
$
    h_{aar}h_{bbr}
$
with the same derivative direction \(r\).

We now apply this to our case
\begin{equation*}
    \phi_{ij} = -\left( \lambda_i^2+\lambda_i\lambda_j+\lambda_j^2\right)
    =-\frac{\lambda_i^3-\lambda_j^3}{\lambda_i-\lambda_j}.
\end{equation*}
Then for any three distinct $i,j,k$,
\begin{align*}
&(\lambda_i-\lambda_j)\phi_{ij}
-
(\lambda_i-\lambda_k)\phi_{ik}
+
(\lambda_j-\lambda_k)\phi_{jk}      \\
&=
-(\lambda_i^3-\lambda_j^3)
+
(\lambda_i^3-\lambda_k^3)
-
(\lambda_j^3-\lambda_k^3)           \\
&=0.
\end{align*}
Hence the term $\mathcal T_\Psi$ vanishes.

Next, by \eqref{eq:curvature_contribution}, the curvature term is
\[
\mathcal R_\Psi
=
\sum_{i<j}
(1+\lambda_i\lambda_j)
\left(
    \lambda_i^2+\lambda_i\lambda_j+\lambda_j^2
\right).
\]
Using
$
    \sum_i\lambda_i=0,\ 
    \sum_i\lambda_i^2=12,\ 
    \sum_i\lambda_i^4=68,
$
we compute
\begin{align*}
\sum_{i<j}(\lambda_i^2+\lambda_j^2)=&3\sum_i\lambda_i^2=36,\\
\sum_{i<j}\lambda_i\lambda_j=&-\frac12\sum_i\lambda_i^2
 =-6,\\
 \sum_{i<j}\lambda_i\lambda_j(\lambda_i^2+\lambda_j^2)
    =&\sum_i\lambda_i^3\sum_{j\ne i}\lambda_j
    =-\sum_i\lambda_i^4
    = -68,\\
\sum_{i<j}\lambda_i^2\lambda_j^2
    =&   \frac12\left[\left(\sum_i\lambda_i^2\right)^2
-\sum_i\lambda_i^4\right]=\frac12(144-68)=38.
\end{align*}
Therefore
\begin{equation*}
 \mathcal R_\Psi=36-6-68+38=0.
\end{equation*} 
Thus $d\Psi=\mathcal D_\Psi dV$.
By \eqref{eq:h_iik_f_k}, for each fixed derivative direction $r$, $\mathcal D_\Psi$ is a multiple of $f_{3,r}^2$.  Consequently,
we can assume
\begin{equation*}
d\Psi=\sum_{r=1}^4G_r(\lambda)f_r^2\,dV.
\end{equation*}    
It remains to identify $G_r$.  We compute $G_1$; the other coefficients are obtained by relabelling the indices.  Let
\begin{equation*}
    s_1=\lambda_2+\lambda_3+\lambda_4,\qquad
    s_2=\lambda_2\lambda_3+\lambda_2\lambda_4+\lambda_3\lambda_4,
\qquad
    s_3=\lambda_2\lambda_3\lambda_4.
\end{equation*}
Since
$
    \sum_i\lambda_i=0,\ 
    \sum_i\lambda_i^2=12,\ 
    \prod_i\lambda_i=1,
$
we have
\begin{equation}\label{s123}
    s_1=-\lambda_1,\qquad
    s_2=\lambda_1^2-6,\qquad
    s_3=\frac{1}{\lambda_1}.
\end{equation}
A direct simplification of the coefficient of $f_{3,1}^2$ gives
$
    G_1=\frac{N_1}{9\Delta^2},
$
where
\begin{align*}
N_1=&-4\lambda_1^2s_1^3s_3+\lambda_1^2s_1^2s_2^2
+6\lambda_1^2s_1s_2s_3-\lambda_1^2s_2^3                                    \\
    & +18\lambda_1 s_1s_3^2-6\lambda_1 s_2^2s_3
    +3s_1^2s_3^2-9s_2s_3^2 .
\end{align*} 
Substituting \eqref{s123} yields
\begin{equation*}
    N_1 =\frac{2}{\lambda_1^2}\left(3\lambda_1^8-40\lambda_1^6+162\lambda_1^4
-120\lambda_1^2+27\right).
\end{equation*}

Hence
$G_1=\frac{2P(\lambda_1^2)}{9\lambda_1^2\Delta^2}.
$
The same computation with the role of \(\lambda_1\) replaced by \(\lambda_r\)
gives
\begin{equation*}
G_r
    =
    \frac{2P(\lambda_r^2)}
         {9\lambda_r^2\Delta^2},
    \qquad r=1,\ldots,4.
\end{equation*}

Finally,
\begin{equation*}
 P(y)=3y^4-40y^3+162y^2-120y+27 =3\left(y^2-\frac{20}{3}y+3\right)^2 + \frac{32}{3}y^2>0.
\end{equation*}
On $Y$,  the four principal curvatures are distinct and non-zero, so
$
    \Delta^2>0.
$
Therefore $G_r(\lambda)>0$. The proof is complete.
\end{proof}

By using the definition of $\Psi$ and \eqref{eq:omega_ij_hijk}, a direct calculation gives
\begin{equation}
    df_3\wedge\Psi=\sum_{r=1}^4 f_{3,r} \omega_r\wedge \Psi=\sum_{r=1}^4U_r(\lambda)f_{3,r}^2\,dV,
    \label{eq:df_Psi_U}
\end{equation}
where
\begin{equation}
 U_r(\lambda)
    =
    \sum_{j\ne r}
    \frac{
    -\left(
        \lambda_r^2+\lambda_r\lambda_j+\lambda_j^2
    \right)\lambda_j
    }
    {3(\lambda_r-\lambda_j)D_j}.
    \label{eq:U_r}
\end{equation} 
\begin{lemma}\label{lem:endpoint_U}
There exists a constant $C>0$ such that
$
    U_r\ge -C
$
near the endpoint $f_3=-8\sqrt{2\sqrt{3}}$, for every $r$, and
$
    U_r\le C
$
near the endpoint $f_3=8\sqrt{2\sqrt{3}}$, for every $r$.
\end{lemma}

\begin{proof}
We prove the statement near $f_3=-8\sqrt{2\sqrt{3}}$.  By
Lemma~\ref{lem:S12_f4_68_algebra}, as $f_3\to -8\sqrt{2\sqrt{3}}$, the limiting principal
curvature configuration is
\begin{equation*}
    \lambda_1<\lambda_2<0<\lambda_3=\lambda_4.
\end{equation*}
Write
$
    \delta=\lambda_4-\lambda_3\to0^+.
$
For $U_3$, the only singular term in \eqref{eq:U_r} is the one with
$j=4$:
\begin{equation*}
    \frac{
    -(\lambda_3^2+\lambda_3\lambda_4+\lambda_4^2)\lambda_4
    }
    {3(\lambda_3-\lambda_4)D_4}.
\end{equation*}
Since
$
    D_4
    =
    (\lambda_4-\lambda_1)(\lambda_4-\lambda_2)(\lambda_4-\lambda_3),
$
we have
\begin{equation*}
    (\lambda_3-\lambda_4)D_4
    =
    -(\lambda_4-\lambda_1)(\lambda_4-\lambda_2)(\lambda_4-\lambda_3)^2
    <0.
\end{equation*}
The numerator tends to a negative number because
$
    \lambda_3,\lambda_4>0.
$
Therefore this singular term tends to $+\infty$.  The remaining terms in $U_3$ have finite limits.  Hence
$ U_3\to+\infty$ as $f_3\to -8\sqrt{2\sqrt{3}}.$
The same argument gives
$
    U_4\to+\infty.
$

For $U_1$, the potentially singular terms are those with
$j=3$ and $j=4$.  
A direct computation yields
\begin{equation*}
U_1=\frac{-\left(\lambda_1^2+\lambda_1\lambda_2+\lambda_2^2 \right)\lambda_2}{3(\lambda_1-\lambda_2)D_1}+ \frac{F_1(\lambda_3)-F_1(\lambda_4)}{\lambda_3-\lambda_4},
\end{equation*}
where
\begin{equation*}
F_1(t)=\frac{t(\lambda_1^2+\lambda_1t+t^2)}{3(t-\lambda_1)^2(t-\lambda_2)}.
\end{equation*}
Note that $F_1$ is smooth near the point that $f_3=-8\sqrt{2\sqrt{3}}$. Therefore,
\begin{equation*}
\lim_{\delta\to 0^+} \frac{F_1(\lambda_3)-F_1(\lambda_4)}{\lambda_3-\lambda_4}=F_1^\prime(\alpha), 
\end{equation*}
where $\alpha=\sqrt{1+\frac{2\sqrt3}{3}}$ is defined in the proof of Lemma \ref{lem:S12_f4_68_algebra}. Thus, $U_1$ has finite limit as $f_3\to -8\sqrt{2\sqrt{3}}$,  and  bounded  near $f_3=-8\sqrt{2\sqrt{3}}$.
A similar argument implies $U_2$ are also bounded
near $f_3=-8\sqrt{2\sqrt{3}}$.  Consequently, $U_r$ are all bounded from below near $f_3=-8\sqrt{2\sqrt{3}}$.

The proof near $f_3=8\sqrt{2\sqrt{3}}$ is analogous.  In that case the limiting configuration is
\begin{equation*}
  \lambda_1=\lambda_2<0<\lambda_3<\lambda_4.
\end{equation*}
After similar arguments as above, $U_1$ and $U_2$ tend to $-\infty$, while $U_3$ and $U_4$ are bounded near $f_3=8\sqrt{2\sqrt{3}}$. Therefore, all the $U_r$ are bounded from above near $f_3=8\sqrt{2\sqrt{3}}$.
\end{proof}

We now finish the proof of Proposition~\ref{prop:S12_f4_68}.  For simplicity, we set $T=8\sqrt{2\sqrt{3}}$. For $\varepsilon>0$  small enough, choose a smooth
cutoff function
$
    \eta_\varepsilon:\mathbb R\to[0,1]
$
such that
\begin{align*}
&\eta_\varepsilon=0,  \quad
    \text{on}    \ \ [-T,-T+\varepsilon/3]\cup[T-\varepsilon/3,T],\\
&\eta_\varepsilon=1,
    \quad
    \text{on }
    [-T+\varepsilon,T-\varepsilon],\\
&\eta_\varepsilon'\ge0,
    \quad\text{on } [-T+\varepsilon/3, -T+\varepsilon],
    \\
   & \eta_\varepsilon'\le0,
    \quad\text{on }[T-\varepsilon, T-\varepsilon/3].
\end{align*} 
By the standard construction of the cutoff function, there exists a constant $C_\eta>0$, independent of $\varepsilon$, such that
\begin{equation}
   |\eta_\varepsilon'(t)|\le \frac{C_\eta}{\varepsilon}. \label{Ceta}
\end{equation}
 
Since $(\eta_\varepsilon\circ f_3)\Psi$ has compact support in $Y$, Stokes' theorem gives
\begin{equation*}
  0= \int_Y d\big((\eta_\varepsilon\circ f_3)\Psi\big).
\end{equation*}
Hence
\begin{equation}\label{etaepsilonf3}
 \int_Y(\eta_\varepsilon\circ f_3)d\Psi
=-\int_Y(\eta_\varepsilon'\circ f_3)\,df_3\wedge\Psi.
\end{equation}
On the other hand, by \eqref{eq:df_Psi_U}, we have
\begin{equation*}
-\int_Y(\eta_\varepsilon'\circ f_3)\,df\wedge\Psi
=
-\int_Y(\eta_\varepsilon'\circ f_3)
\sum_{r=1}^4U_r f_{3,r}^2\,dV.
\end{equation*}
On  $[-T+\varepsilon/3, -T+\varepsilon]$, $\eta_\varepsilon'\ge0$ and
$U_r\ge -C$ by Lemma \ref{lem:endpoint_U}. Hence
\begin{equation*}
  -\eta_\varepsilon' U_r f_{3,r}^2
    \le
    C|\eta_\varepsilon'|f_{3,r}^2.
\end{equation*}
On $[T-\varepsilon, T-\varepsilon/3]$, $\eta_\varepsilon'\le0$ and
$U_r\le C$. Hence again
\begin{equation*}
    -\eta_\varepsilon' U_r f_{3,r}^2
    \le
    C|\eta_\varepsilon'|f_{3,r}^2.
\end{equation*}
Therefore
\begin{equation*}
-\int_Y(\eta_\varepsilon'\circ f_3)\,df\wedge\Psi
\le
C\int_{\{|f_3|>T-\varepsilon\}}
|\eta_\varepsilon'\circ f_3|\,|\nabla f_3|^2\,dV.
\end{equation*}
By Lemma \ref{lem:dPsi_detailed}, $d\Psi\ge0$. Combining this fact with $\eta_\varepsilon\ge0$ and \eqref{etaepsilonf3}, we have 
\begin{equation}
 0\le\int_Y(\eta_\varepsilon\circ f_3)d\Psi \le C \int_{\{|f_3|>T-\varepsilon\}} |\eta_\varepsilon'\circ f_3|\,|\nabla f_3|^2\,dV.
   \label{eq:cutoff_1}
\end{equation} 

Since
\begin{equation*}
    \operatorname{supp}(\eta_\varepsilon'\circ f_3)
    \subset
    \{T-\varepsilon\le |f_3|\le T-\varepsilon/3\},
\end{equation*}
and the upper bound \eqref{Ceta}. Lemma~\ref{lem:layer_estimate} gives
\begin{equation}
    \lim_{\varepsilon\to0}
    \int_M
    |\eta_\varepsilon'\circ f_3|\,|\nabla f_3|^2\,dV
    =
    0.   \label{eq:layer_limit}
\end{equation}

 Combining \eqref{eq:cutoff_1} and \eqref{eq:layer_limit}, we obtain
\begin{equation}
    \lim_{\varepsilon\to0}
    \int_Y(\eta_\varepsilon\circ f_3)d\Psi
    =
    0.
 \label{eq:dPsi_limit}
\end{equation}

Now fix $0<\varepsilon'<T$.  On the compact set
\begin{equation*}
    Y_{\varepsilon'}
    :=
    \{p\in M:\ |f_3(p)|\le T-\varepsilon'\},
\end{equation*}
the coefficients $G_r(\lambda)$ in Lemma~\ref{lem:dPsi_detailed} have a
positive lower bound.  Therefore there exists $c_{\varepsilon'}>0$ such that
\begin{equation*}
    d\Psi
    \ge
    c_{\varepsilon'}|\nabla f_3|^2\,dV
    \qquad
    \text{on }Y_{\varepsilon'}.
\end{equation*}
For all sufficiently small $\varepsilon<\varepsilon'$, one has
$
    \eta_\varepsilon\equiv1
 $
   on $Y_{\varepsilon'}$.
Hence
\begin{equation*}
    0
    \le
    c_{\varepsilon'}
    \int_{Y_{\varepsilon'}}|\nabla f_3|^2\,dV
    \le
    \int_Y(\eta_\varepsilon\circ f_3)d\Psi.
\end{equation*}
Letting $\varepsilon\to0$ and using \eqref{eq:dPsi_limit}, we get
\begin{equation*}
    \int_{Y_{\varepsilon'}}|\nabla f_3|^2\,dV=0.
\end{equation*}
Since $\varepsilon'>0$ is arbitrary,
\begin{equation*}
    \nabla f_3=0
    \qquad
    \text{on }Y.
\end{equation*}

If $Y=\emptyset$, then $f_3$ takes values only in $\{-T,T\}$.  Since $M$ is connected and $f_3$ is continuous, $f_3$ is constant.

If $Y\ne\emptyset$, then $f_3$ is constant on each connected component of $Y$.  Since this constant lies in $(-T,T)$, such a component cannot have boundary in $M$.  Hence it is both open and closed in $M$.  By the
connectedness of $M$, we have
$
    Y=M,
$
and $f=f_3$ is constant on $M$.

Thus
\begin{equation*}
    f_1\equiv0,\qquad
    f_2\equiv12,\qquad
    f_3\equiv\text{constant},\qquad
    f_4\equiv68.
\end{equation*}
Therefore, the principal curvatures are constants since they are all the roots of the characterized polynomial \eqref{characterpoly}.  Hence $M$ is isoparametric.  Since
$
    |A|^2\equiv12,
$
the classification of minimal isoparametric hypersurfaces in $\mathbb S^5$
implies that $M$ is congruent to the Cartan minimal hypersurface.
This proves Proposition~\ref{prop:S12_f4_68}.
\end{proof}


\vspace{0.5cm}
\subsection{Completion of the proof}
\begin{proof}[Proof of Theorem~\ref{thm:main}]
First, by Lemma \ref{lem:umbilic_implies_totally_geodesic}, if $M$ has an umbilic point, then $M$ is totally geodesic. Assume $M$ has no umbilic point, then by Lemma \ref{lem:no_umbilic_chi_zero}, we have $\chi(M)=0$. Consequently, by Lemma \ref{lem:GBC}, we obtain
\begin{align*}
|A|^2\equiv 12, \quad f_4=\frac{17}{36} |A|^4\equiv 68.
\end{align*}
The conclusion follows immediately by Proposition \ref{prop:S12_f4_68}.
\end{proof}

\vspace{2cm}

\providecommand{\bysame}{\leavevmode\hbox to3em{\hrulefill}\thinspace}
\providecommand{\MR}{\relax\ifhmode\unskip\space\fi MR }


\end{document}